\newcommand{\email}[1]{email: #1} 
\newcommand{\institute}[1]{\date{#1}}
\newenvironment{acknowledgement}{\paragraph{Acknowledgements}\small}{}
\theoremstyle{plain}
  \newtheorem{proposition}{Proposition}
  \newtheorem{lemma}{Lemma}
  \newtheorem{corollary}{Corollary}
  \newtheorem{conjecture}{Conjecture}
\theoremstyle{definition}
\theoremstyle{remark}
\newcommand{\bsa}{{\boldsymbol{a}}}
\newcommand{\bsh}{{\boldsymbol{h}}}
\newcommand{\bsw}{{\boldsymbol{w}}}
\newcommand{\bsx}{{\boldsymbol{x}}}
\newcommand{\bsy}{{\boldsymbol{y}}}
\newcommand{\bsz}{{\boldsymbol{z}}}
\newcommand{\bszero}{{\boldsymbol{0}}} 
\newcommand{\bsone}{{\boldsymbol{1}}}  
\newcommand{\bsgamma}{{\boldsymbol{\gamma}}}
\newcommand{\rd}{{\mathrm{d}}}
\newcommand{\R}{{\mathbb{R}}} 
\newcommand{\Z}{{\mathbb{Z}}} 
\DeclareSymbolFont{bbold}{U}{bbold}{m}{n}
\DeclareSymbolFontAlphabet{\mathbbold}{bbold}
\newcommand{\ind}{{\mathbbold{1}}}
\newcommand{\calH}{{\mathcal{H}}}
\newcommand{\setu}{{\mathfrak{u}}}
\newcommand{\setv}{{\mathfrak{v}}}
\DeclareMathOperator{\wce}{wce}
\newcommand{\sob}{\mathrm{usob}}
\newcommand{\kor}{\mathrm{kor}}
\newcommand{\lin}{\mathrm{lin}}
\newcommand{\m}{\mathrm{vm}}
\newcommand{\imagunit}{\mathrm{i}}
\newcommand{\twopii}{2\pi\imagunit\,}
\begin{document}

\title{The Analysis of Vertex Modified Lattice Rules\\ in a Non-Periodic Sobolev Space}

\author{Dirk Nuyens \and Ronald Cools}

\date{Department of Computer Science, KU Leuven, Belgium \\
 \email{dirk.nuyens@cs.kuleuven.be; ronald.cools@cs.kuleuven.be}}

\maketitle


\paragraph{{\sl Dedicated to Ian H.~Sloan's beautiful contributions to the existence and construction of lattice rules, on the occasion of his 80th birthday.}}

\abstract{
  In a series of papers, in 1993, 1994 \& 1996 (see \cite{NS93,NS94,NS96}), Ian Sloan together with Harald Niederreiter introduced a modification of lattice rules for non-periodic functions, called ``vertex modified lattice rules'', and a particular breed called ``optimal vertex modified lattice rules''.
  These are like standard lattice rules but they distribute the point at the origin to all corners of the unit cube, either by equally distributing the weight and so obtaining a multi-variate variant of the trapezoidal rule, or by choosing weights such that multilinear functions are integrated exactly.
  In the 1994 paper, Niederreiter \& Sloan concentrate explicitly on Fibonacci lattice rules, which are a particular good choice of 2-dimensional lattice rules.
  Error bounds in this series of papers were given related to the star discrepancy.
\\
  In this paper we pose the problem in terms of the so-called unanchored Sobolev space, which is a reproducing kernel Hilbert space often studied nowadays in which functions have $L_2$-integrable mixed first derivatives.
  It is known constructively that randomly shifted lattice rules, as well as deterministic tent-transformed lattice rules and deterministic fully symmetrized lattice rules can achieve close to $O(N^{-1})$ convergence in this space, see Sloan, Kuo \& Joe (2002, see~\cite{SKJ2002}) and Dick, Nuyens \& Pillichshammer (2014, see~\cite{DNP2014}) respectively, where possible $\log(N)^s$ terms are taken care of by weighted function spaces.
\\
  We derive a break down of the worst-case error of vertex modified lattice rules in the unanchored Sobolev space in terms of the worst-case error in a Korobov space, a multilinear space and some additional ``mixture term''. 
  For the 1-dimensional case this worst-case error is obvious and gives an explicit expression for the trapezoidal rule.
  In the 2-dimensional case this mixture term also takes on an explicit form for which we derive upper and lower bounds.
  For this case we prove that there exist lattice rules with a nice worst-case error bound with the additional mixture term of the form $N^{-1} \log^2(N)$.
}

\section{Introduction}

We study the numerical approximation of an $s$-dimensional integral over the unit cube
\begin{align*}
  I(f)
  &:=
  \int_{[0,1]^s} f(\bsx) \,\rd{\bsx}
  .
\end{align*}
A (rank-1) \emph{lattice rule} with $N$ points in $s$ dimensions is an equal weight cubature rule
\begin{align}\label{eq:latticerule}
  Q(f; \bsz, N)
  &:=
  \frac1N \sum_{k=0}^{N-1} f\left( \left\{\frac{\bsz k}{N}\right\} \right)
  ,
\end{align}
where $\bsz \in \Z^s$ is the \emph{generating vector} of which the components are most often chosen to be relatively prime to $N$, and the curly braces $\{ \cdot \}$ mean to take the fractional part componentwise.
Clearly, as this is an equal weight rule, the constant function is integrated exactly.
The classical theory, see \cite{Nie92,SJ94}, is mostly concerned with periodic functions and then uses the fact that $f$ can be expressed in an absolutely converging Fourier series to study the error.
See also \cite{Nuy2014} for a recent overview of this ``spectral'' error analysis and its application to lattice rules.
In this paper we only consider real-valued integrand functions.

In a series of papers \cite{NS93,NS94,NS96} Niederreiter and Sloan introduced vertex modified lattice rules, and, more general, vertex modified quasi-Monte Carlo rules, to also cope with non-periodic functions.
In this paper we revisit these vertex modified lattice rules using the technology of reproducing kernel Hilbert spaces, more precisely the unanchored Sobolev space of smoothness~1.
The inner product for the one-dimensional unanchored Sobolev space is defined by
\begin{align}\label{eq:sobip}
  \langle f, g \rangle_{\sob1,1,\gamma_1}
  &:=
  \int_{0}^{1} f(x) \,\rd{x} \, \int_{0}^{1} g(x) \,\rd{x}
  +
  \frac1{\gamma_1} \int_{0}^{1} f'(x) \,g'(x) \,\rd{x}
  ,
\end{align}
where, more generally, $\gamma_j$ is a ``product weight'' associated with dimension~$j$, which is used to model the importance of different dimensions, see, e.g., \cite{SW98}.
In the multivariate case we take the tensor product such that the norm is defined by
\begin{align}\notag
  \|f\|_{\sob1,s,\bsgamma}^2
  &:=
  \sum_{\setu \subseteq \{1:s\}} \gamma_{\setu}^{-1}
  \int_{[0,1]^{|\setu|}}
  \left( \int_{[0,1]^{s-|\setu|}} \frac{\partial^{|\setu|}}{\partial\bsx_{\setu}} f(\bsx) \,\rd{\bsx_{-\setu}} \right)^2
  \rd{\bsx_{\setu}}
  \\\label{eq:sobnorm}
  &\hphantom{:}=
  \sum_{\setu \subseteq \{1:s\}} \gamma_{\setu}^{-1}
  \left\| \int_{[0,1]^{s-|\setu|}} \frac{\partial^{|\setu|}}{\partial\bsx_{\setu}} f(\bsx) \,\rd{\bsx_{-\setu}} \right\|^2_{L_2}
  ,
\end{align}
with $\gamma_{\setu} = \prod_{j\in\setu} \gamma_j$.
We use the short hand notation $\{1:s\} = \{1,\ldots,s\}$ and thus in~\eqref{eq:sobnorm}
$\setu$ ranges over all subsets of $\{1,\ldots,s\}$, and $-\setu$ is the complement with respect to the full set, $-\setu = \{1:s\} \setminus \setu$.
Note that~\eqref{eq:sobnorm} is a sum of $L_2$-norms of mixed first derivatives for all variables in $\setu$ where all other variables are averaged out.

\section{Vertex Modified Lattice Rules}

The \emph{vertex modified lattice rule} proposed in \cite{NS93} is given by
\begin{align}\label{eq:Qvm}
  Q^{\m}(f; \bsz, N, \bsw)
  &=
  \sum_{\bsa \in \{0,1\}^s} w(\bsa) f(\bsa)
  +
  \frac1N \sum_{k=1}^{N-1} f\left(\left\{\frac{\bsz k}{N}\right\}\right)
  ,
\end{align}
with well chosen vertex weights $w(\bsa)$ such that the constant function is still integrated exactly.
It is assumed that $\gcd(z_j,N) = 1$, for all $j=1,\ldots,s$, such that only the lattice point for $k=0$ is on the edge of the domain $[0,1]^s$, and this is why the second sum only ranges over $k=1,\ldots,N-1$, i.e., the interior points.
We note that typically $N$ equals the number of function evaluations. This is not true anymore for vertex modified lattice rules.
We define $M$ to be the total number of function evaluations, and this is given by
\begin{align}\label{eq:M}
  M 
  &= 
  2^s + N - 1
  .
\end{align}
The $2^s$ term makes us focus on the low-dimensional cases only, and we derive explicit results for $s=2$ later.
The vertex modified rule can then be represented as a standard cubature rule of the form
\begin{align}
  Q(f; \{(w_k,\bsx_k)\}_{k=1}^M) 
  = 
  Q(f) 
  &= 
  \sum_{k=1}^M w_k \, f(\bsx_k)
  ,
\end{align}
with appropriate choices for the pairs $(w_k, \bsx_k)$.
For the vertex modified rules we only need to specify the weights at the vertices of the unit cube, all other remain unchanged from the standard lattice rule and are $1/N$.

Two particular choices for the weights $w(\bsa)$ have been proposed \cite{NS93,NS94,NS96}.
The first one has constant weights $w(\bsa) \equiv 1 / (2^s N)$ which mimics the trapezoidal rule in each one-dimensional projection:
\begin{align*}
  T(f; \bsz, N)
  &:=
  Q^{\m}(f; \bsz, N, \frac1{2^s N})
  =
  \frac1{2^s N} \sum_{\bsa \in \{0,1\}^s} f(\bsa)
  +
  \frac1N \sum_{k=1}^{N-1} f\left(\left\{\frac{\bsz k}{N}\right\}\right)
  .
\end{align*}
A second particular choice of weights $w^*(\bsa)$ leads to the so-called \emph{optimal vertex modified lattice rule} \cite{NS93}:
\begin{align*}
  Q^{*}(f; \bsz, N)
  &:=
  Q^{\m}(f; \bsz, N, \bsw^*)
  =
  \sum_{\bsa \in \{0,1\}^s} w^*(\bsa) f(\bsa)
  +
  \frac1N \sum_{k=1}^{N-1} f\left(\left\{\frac{\bsz k}{N}\right\}\right)
  .
\end{align*}
This rule integrates all multilinear polynomials exactly, i.e.,
\begin{align*}
  Q^{*}(f; \bsz, N)
  =
  Q^{\m}(f; \bsz, N, \bsw^*)
  &=
  I(f)
  &\text{for all}&&
  f(\bsx)
  &=
  \prod_{j=1}^s x_j^{k_j}
  \quad \text{with }
  k_j \in \{0,1\}
  .
\end{align*}

There is no need to solve a linear system of equations to find the weights $w^*(\bsa)$.
The following result from \cite{NS93} shows they can be determined explicitly.
\begin{proposition}
  For every $\bsa \in \{0,1\}^s$ define $\setu$ to be the support of $\bsa$, i.e., $\setu = \setu(\bsa) = \{ 1 \le j \le s : a_j \ne 0 \}$.
  Then the weight $w^*(\bsa)$ is given by
  \begin{align*}
    w^*(\bsa)
    =
    w^{*}_{\setu}
    &=
    \frac1{2^s} - \frac1N \sum_{k=1}^{N-1} \ell_{\setu}\left(\left\{\frac{\bsz k}{N}\right\}\right)
    &\text{where}&&
    \ell_{\setu}(\bsx)
    &:=
    \prod_{j \in \setu} x_j \prod_{j \in \{1:s\} \setminus \setu} (1-x_j)
    .
  \end{align*}
\end{proposition}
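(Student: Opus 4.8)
The plan is to test exactness not on the multilinear monomials $\prod_{j\in\setv} x_j$ directly, but on the nodal (Lagrange) basis $\{\ell_{\setv} : \setv \subseteq \{1:s\}\}$ associated with the vertices of $[0,1]^s$, which spans the same $2^s$-dimensional space of multilinear polynomials. Concretely, I would first record two elementary facts. For $\bsa \in \{0,1\}^s$ with support $\setu(\bsa)$, the product form of $\ell_{\setv}$ together with $a_j \in \{0,1\}$ gives the nodal property
\begin{align*}
  \ell_{\setv}(\bsa)
  =
  \prod_{j\in\setv} a_j \prod_{j\in\{1:s\}\setminus\setv} (1-a_j)
  =
  \begin{cases} 1, & \setu(\bsa) = \setv,\\ 0, & \text{otherwise.}\end{cases}
\end{align*}
And since $\int_0^1 x \,\rd x = \int_0^1 (1-x)\,\rd x = \tfrac12$, the tensor-product form yields $I(\ell_{\setv}) = 2^{-s}$ for every $\setv$.

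Next I would note that $\{\ell_{\setv}\}_{\setv}$ is indeed a basis of the multilinear polynomials: the $2^s$ functions are linearly independent by the nodal property above (a vanishing linear combination, evaluated at each vertex, forces all coefficients to be zero), and $2^s$ is the dimension of the space; alternatively one passes explicitly between the two bases by expanding $\prod_{j\notin\setv}(1-x_j)$. Because both $Q^{\m}(\cdot\,;\bsz,N,\bsw)$ and $I$ are linear functionals, the rule integrates every multilinear polynomial exactly if and only if $Q^{\m}(\ell_{\setv};\bsz,N,\bsw) = I(\ell_{\setv})$ for all $\setv \subseteq \{1:s\}$.

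Finally I would evaluate $Q^{\m}$ on $\ell_{\setv}$. In the vertex sum $\sum_{\bsa\in\{0,1\}^s} w(\bsa)\,\ell_{\setv}(\bsa)$ the nodal property kills every term except the one at the vertex $\bsa$ with $\setu(\bsa) = \setv$, leaving its weight $w^*_{\setv}$; the interior part is unaffected by the choice of weights, so
\begin{align*}
  Q^{\m}(\ell_{\setv};\bsz,N,\bsw)
  =
  w^*_{\setv}
  +
  \frac1N \sum_{k=1}^{N-1} \ell_{\setv}\!\left(\left\{\frac{\bsz k}{N}\right\}\right).
\end{align*}
Equating this with $I(\ell_{\setv}) = 2^{-s}$ and isolating $w^*_{\setv}$ gives precisely the stated formula. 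Since the whole argument is an equivalence, it proves existence and uniqueness of the optimal weights at once (so the underlying $2^s\times 2^s$ linear system is automatically nonsingular). There is no real obstacle here: the only thing to get right is the bookkeeping linking a vertex $\bsa$, its support $\setu(\bsa)$, and the index $\setv$ of the test function, together with the trivial moments $\int_0^1 x\,\rd x = \int_0^1(1-x)\,\rd x = \tfrac12$.
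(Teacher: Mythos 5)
Your proposal is correct and follows essentially the same route as the paper: the paper's proof also introduces the Lagrange-type basis $\ell_{\setu}$ with the nodal property $\ell_{\setu}(\bsa) = \ind_{\setu(\bsa)=\setu}$, imposes $Q(\ell_{\setu}) = I(\ell_{\setu}) = 2^{-s}$, and reads off $w^*_{\setu}$ from the single surviving vertex term. Your write-up merely spells out the details (the value of $I(\ell_{\setu})$, the basis/uniqueness argument) that the paper leaves implicit.
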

\begin{proof}
The idea is to use a kind of a Lagrange basis which is $0$ in all vertex points $\bsa \in \{0,1\}^s$ except in one.
For this purpose, consider the basis, for $\setu \subseteq \{1:s\}$,
\begin{align*}
  \ell_{\setu}(\bsx)
  &=
  \prod_{j \in \setu} x_j \prod_{j \in \{1:s\} \setminus \setu} (1-x_j)
\end{align*}
such that $\ell_{\setu}(\bsa) = \ind_{\setu(\bsa) = \setu}$.
Demanding that $Q(\ell_{\setu}) = I(\ell_{\setu})$ for some $\setu \subseteq \{1:s\}$, gives
\begin{align*}
  \sum_{\bsa \in \{0,1\}^s} w^*(\bsa) \, \ell_{\setu}(\bsa) 
  + \frac1N \sum_{k=1}^{N-1} \ell_{\setu}\left(\left\{\frac{\bsz k}{N}\right\}\right)
  &=
  \int_{[0,1]^s} \ell_{\setu}(\bsx) \,\rd{\bsx}
\end{align*}
from where the result follows.
\end{proof}

\section{Reproducing Kernel Hilbert Spaces}

In this section we collect some well known results.
For more details the reader is referred to, e.g., \cite{Hic98a,NW2008,DKS2013,DNP2014,Nuy2014}.

The reproducing kernel $K : [0,1] \times [0,1] \to \R$ of a one-dimensional reproducing kernel Hilbert space $\calH(K)$ is a symmetric, positive definite function which has the reproducing property
\begin{align*}
  f(y) = \langle f, K(\cdot,y) \rangle_{K}
  \quad
  \text{for all } f \in \calH(K) \text{ and } y \in [0,1].
\end{align*}
The induced norm in the space will be denoted by $\|f\|_{K} = \sqrt{\langle f, f \rangle_{K}}$.
If the space has a countable basis $\{\varphi_{h}\}_{h}$ which is orthonormal with respect to the inner product of the space, then, by virtue of Mercer's theorem, the kernel is given by
\begin{align*}
  K(x, y)
  &=
  \sum_{h} \varphi_{h}(x) \, \overline{\varphi_{h}(y)}
  .
\end{align*}
For the multivariate case we consider the tensor product space and the kernel is then given by
\begin{align*}
  K_{s}(\bsx, \bsy)
  &=
  \prod_{j=1}^s K(x_j,y_j)
  .
\end{align*}

We define the \emph{worst-case error} of integration using a cubature rule $Q$ to be
\begin{align*}
  \wce(Q; K)
  &:=
  \sup_{\substack{f \in \calH(K) \\ \|f\|_{K} \le 1}} | Q(f) - I(f) |
  .
\end{align*}
For a general cubature formula $Q(f) = \sum_{k=1}^{M} w_{k} \, f(\bsx_k)$ the squared worst-case error can be written as, see, e.g., \cite{Hic98a},
\begin{align}\label{eq:wce}
  \wce(Q; K)^{2}
  &=
  \int_{[0,1]^{2s}} K(\bsx,\bsy) \,\rd{\bsx}\rd{\bsy}
  -
  2 \sum_{k=1}^{M} w_{k} \int_{[0,1]^{s}} K(\bsx_k, \bsy) \,\rd{\bsy}
  +
  \sum_{k,\ell=1}^{M} w_{k}w_{\ell} \, K(\bsx_k, \bsx_{\ell})
  .
\end{align}
For all kernels in the remainder of the text we have that $\int_0^1\int_0^1 K(x,y) \,\rd{x}\rd{y} = 1$ and $\int_0^1 K(x,y) \,\rd{y} = 1$ for all $x \in [0,1]$ and this also holds for the multivariate kernel due to the product structure.

\subsection{The Korobov Space}\label{sec:Korobov-space}

A well known example is the Korobov space which consists of periodic functions which can be expanded in an absolutely converging Fourier series.
We refer the reader to the general references in the beginning of this section for further information on the Korobov space.
Denote the Fourier coefficients by
\begin{align*}
  \hat{f}(\bsh)
  &:=
  \int_{[0,1]^s} f(\bsx) \, \exp(-\twopii \bsh \cdot \bsx) \,\rd{\bsx}
  ,
  \qquad
  \bsh \in \Z^s
  .
\end{align*}
In the one-dimensional case, if we assume an algebraic decay of $h^{-\alpha}$, $\alpha > 1/2$, by means of
\begin{align*}
  \|f\|^{2}_{\kor \alpha,1,\gamma_1}
  &:=
  |\hat{f}(0)|^2 +
  \sum_{0 \ne h \in \Z} |\hat{f}(h)|^2 \, \gamma_1^{-1} |h|^{2\alpha}
  < 
  \infty
  ,
\end{align*}
then the reproducing kernel is given by
\begin{align*}
  K^{\kor \alpha}_{1,\gamma_1}(x,y)
  &:=
  1 + \gamma_1 \sum_{0 \ne h \in \Z} \frac{\exp(\twopii h (x-y))}{|h|^{2\alpha}}
  .
\end{align*}
We now specifically concentrate on the case $\alpha = 1$ as this will be of use throughout the paper.
For $\alpha = 1$ the reproducing kernel for the $s$-variate case can be written as
\begin{align*}
  K^{\kor 1}_{s,\bsgamma}(\bsx, \bsy)
  &=
  \prod_{j=1}^{s} \left( 1 + 2 \pi^2 \gamma_j B_2(\{x_j-y_j\}) \right)
  =
  \sum_{\setu \subseteq \{1:s\}} \prod_{j\in\setu} 2 \pi^2 \gamma_j B_2(\{x_j-y_j\})
  ,
\end{align*}
where $B_2(t) = t^{2} - t + \tfrac16 = \frac1{2\pi^2} \sum_{0\ne h \in \Z} \frac{\exp(\twopii h t)}{h^2}$, for $0 \le t \le 1$, is the 2nd degree Bernoulli polynomial and $\bsgamma = \{ \gamma_j \}_{j=1}^s$ is a set of product weights which are normally used to model dimension importance.
Here we will not make use of the weights $\bsgamma$, except for scaling, such that the worst-case error of one space shows up in the worst-case error expression of another space.

For a general cubature rule $Q(f) = \sum_{k=1}^{M} w_k \, f(\bsx_k)$, with $\sum_{k=1}^M w_k = 1$, using~\eqref{eq:wce} one obtains
\begin{align}\label{eq:wceKor}
  \wce(Q; K^{\kor 1}_{s,\bsgamma})^{2}
  &=
  \sum_{k,\ell=1}^{M} w_{k}w_{\ell}
  \sum_{\emptyset \ne \setu \subseteq \{1:s\}} \prod_{j\in\setu} 2 \pi^2 \gamma_j B_2(\{x_{k,j}-x_{\ell,j}\})
  .
\end{align}
In case $Q(f) = Q(f; \bsz, N)$ is a lattice rule then the difference of two points is also a point of the point set and therefore the squared worst-case error formula simplifies to
\begin{align*}
  \wce(Q(\cdot;\bsz,N); K^{\kor 1}_{s,\bsgamma})^{2}
  &=
  \frac1N \sum_{k=0}^{N-1}
  \sum_{\emptyset \ne \setu \subseteq \{1:s\}} \prod_{j\in\setu} 2 \pi^2 \gamma_j B_2(x_{k,j})
  .
\end{align*}
We remark that, apart from the higher cost, using a vertex modified lattice rule in the Korobov space makes no difference to the worst-case error,
\begin{align}\label{eq:wceQmKor}
  \wce(Q^{\m}(\cdot;\bsz,N,\bsw); K^{\kor \alpha}_{s,\bsgamma})
  &=
  \wce(Q(\cdot;\bsz,N); K^{\kor \alpha}_{s,\bsgamma})
  ,
\end{align}
since $K^{\kor \alpha}_{s,\bsgamma}(\bsa, \bszero) = K^{\kor \alpha}_{s,\bsgamma}(\bszero, \bszero)$ for all $\bsa \in \{0,1\}^{s}$ and the weights $w(\bsa)$ are such that they sum to $1/N$ due to the constraint of integrating the constant function exactly.

\subsection{The Space of Multilinear Functions}

Define the following multilinear functions, for $\setu \subseteq \{1:s\}$,
\begin{align*}
  g_{\setu}(\bsx)
  &:=
  \prod_{j \in \setu} \sqrt{12} \, (x_j - \tfrac12)
  =
  \prod_{j \in \setu} \sqrt{12} \, B_1(x_j)
  ,
\end{align*}
so $g_{\emptyset}(\bsx) = 1$, $g_{\{1\}}(\bsx) = \sqrt{12} \, (x_1 - \tfrac12)$ and so on, where $B_1(t) = t - \tfrac12$ is the 1st degree Bernoulli polynomial.
These functions form an orthonormal basis $\{ g_{\setu} \}_{\setu \subseteq \{1:s\}}$ with respect to the standard $L_2$ inner product
and we can thus construct a reproducing kernel for this finite dimensional space:
\begin{align*}
  K^{\lin}_{s,\bsgamma}(\bsx, \bsy)
  &:=
  \sum_{\setu \subseteq \{1:s\}} \gamma_{\setu} \, g_{\setu}(\bsx) \, g_{\setu}(\bsy)
  =
  1
  +
  \sum_{\emptyset \ne \subseteq \{1:s\}}
    \prod_{j \in \setu} 12 \, \gamma_j \, B_1(x_j) \, B_1(y_j)
  ,
\end{align*}
where we introduced standard product weights.
The worst-case error for a general cubature rule $Q(f) = \sum_{k=1}^{M} w_k \, f(\bsx_k)$, for which $\sum_{k=1}^M w_k = 1$, is given by
\begin{align}\label{eq:wcelin}
  \wce(Q; K^{\lin}_{s,\bsgamma})^{2}
  &=
  \sum_{k,\ell=1}^{M} w_k w_\ell \sum_{\emptyset \ne \setu \subseteq \{1:s\}} \prod_{j\in\setu} 12 \, \gamma_j \, (x_{k,j} - \tfrac12) \, (x_{\ell,j} - \tfrac12)
  .
\end{align}

We remark that this space is not such an interesting space on its own.
The one-point rule which samples at the point $(\tfrac12,\ldots,\tfrac12)$ has worst-case error equal to zero in this space, as can be seen immediately from~\eqref{eq:wcelin}.
The worst-case error in this multilinear space will show up as part of the worst-case error in the Sobolev space that we will discuss next.
Also note that, by construction, the optimal vertex modified lattice rule has
\begin{align*}
  \wce(Q^{*}; K^{\lin}_{s,\bsgamma})
  &=
  0
  .
\end{align*}
Naturally for $s=1$ also $\wce(T; K^{\lin}_{1,\bsgamma}) = 0$.

\subsection{The Unanchored Sobolev Space of Smoothness~$1$}

The reproducing kernel of the unanchored Sobolev space of smoothness~$1$ is given by
\begin{align*}
  K^{\sob 1}_{s,\bsgamma}(\bsx,\bsy)
  &:=
  \prod_{j=1}^s \left(1 + \gamma_j B_1(x_j) B_1(y_j) + \gamma_j \frac{B_2(\{x_j-y_j\})}{2} \right)
  ,
\end{align*}
and the norm by~\eqref{eq:sobnorm}. (The inner product is built as the tensor product based on the one-dimensional inner product~\eqref{eq:sobip}.)
We note that for functions from the Korobov space with $\alpha=1$
\begin{align*}
  \|f\|_{\sob1,s,\bsgamma} 
  &=
  \|f\|_{\kor1,s,\bsgamma/(2\pi)^{2}}
  \qquad
  \text{for all} \quad f \in \calH(K^{\kor 1}_{s,\bsgamma}),
\end{align*} 
where $\bsgamma/(2\pi)^{2}$ means all weights are rescaled by a factor of $1/(2\pi)^{2}$, which can easily be seen from the one-dimensional case using~\eqref{eq:sobip} and the Fourier series of~$f$, see also \cite{DNP2014}.

Lattice rules were studied in the unanchored Sobolev space in \cite{DNP2014} using the tent-transform and were shown to achieve $O(N^{-1})$ convergence rate without the need for random shifting as was previously known.
A second approach in that paper used full symmetrisation of the point set (reflection around $\tfrac12$ for each combination of dimensions; this is the generalization of the $1$-point rule at $\tfrac12$ for the multilinear space as discussed above, making sure all multilinear functions are integrated exactly).
In a way we can look at vertex modified lattice rules $Q^{\m}$ as being only the symmetrisation of the node $\bszero$ but with different weights.
Using equal weights leads to the rule $T(\cdot; \bsz, N)$ which is the full symmetrisation of the point $\bszero$ (but does not necessarily integrate the multilinear functions exactly).
For the rule $Q^*(\cdot; \bsz, N)$ the weights are chosen in a more intrinsic way such that they integrate multilinear functions exactly and we will concentrate our analysis on this rule.

\section{Error Analysis}

\subsection{Decomposing the Error for the Unanchored Sobolev Space}

We study the worst-case error of using a vertex modified lattice rule in the unanchored Sobolev space.
First note
\begin{multline}\label{eq:Ksob}
  K^{\sob 1}_{s,\bsgamma}(\bsx,\bsy)
  =
  1
  +
  \sum_{\emptyset \ne \setu \subseteq \{1:s\}}
    \prod_{j \in \setu} \gamma_j B_1(x_j) B_1(y_j)
  +
  \sum_{\emptyset \ne \setu \subseteq \{1:s\}}
    \prod_{j \in \setu} \gamma_j \frac{B_2(\{x_j-y_j\})}{2}
  \\
  +
  \sum_{\emptyset \ne \setu \subseteq \{1:s\}}
    \sum_{\emptyset \ne \setv \subset \setu}
      \prod_{j \in \setu} \gamma_j B_1(x_j) B_1(y_j)
      \prod_{j' \in \setv} \gamma_{j'} \frac{B_2(\{x_{j'}-y_{j'}\})}{2}
  .
\end{multline}
From this the following break down of the worst-case error can be obtained.

\begin{proposition}\label{prop:wce}
  The squared worst-case error for a general cubature rule $Q(f) = \sum_{k=1}^{M} w_k \, f(\bsx_k)$, with $\sum_{k=1}^{M} w_{k}=1$, in the unanchored Sobolev space of smoothness~$1$ is given by
  \begin{multline*}
    \wce(Q; K^{\sob 1}_{s,\bsgamma})^{2}
    =
    \wce(Q; K^{\lin}_{s,\bsgamma/12})^2
    +
    \wce(Q; K^{\kor 1}_{s,\bsgamma/(2\pi)^2})^2
    \\+
    \sum_{k,\ell=1}^{M} w_k w_\ell
  \sum_{\emptyset \ne \setu \subseteq \{1:s\}}
    \sum_{\emptyset \ne \setv \subset \setu}
      \prod_{j \in \setu \setminus \setv} \gamma_j B_1(x_{k,j}) B_1(x_{\ell,j})
      \prod_{j' \in \setv} \gamma_{j'} \frac{B_2(\{x_{k,j'}-x_{\ell,j'}\})}{2}
  .
  \end{multline*}
\end{proposition}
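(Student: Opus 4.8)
The plan is to substitute the four-block expansion~\eqref{eq:Ksob} of the reproducing kernel into the general squared-worst-case-error identity~\eqref{eq:wce}, and to use that the right-hand side of~\eqref{eq:wce}, regarded as a functional $\calL[K]$ of the kernel $K$, is manifestly \emph{linear} in $K$. Writing $K^{\sob1}_{s,\bsgamma}=1+K_{\lin}+K_{\kor}+K_{\m}$, where the four summands are the constant block, the pure-$B_1$-product block, the pure-$B_2(\{\cdot-\cdot\})$-product block, and the mixed block of~\eqref{eq:Ksob}, linearity immediately gives $\wce(Q;K^{\sob1}_{s,\bsgamma})^2=\calL[1]+\calL[K_{\lin}]+\calL[K_{\kor}]+\calL[K_{\m}]$, and it suffices to evaluate the four terms separately.

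First, because $\sum_{k=1}^M w_k=1$ and (as recorded in the paper) the involved kernels satisfy $\int\!\!\int K=1$ and $\int K(x,\cdot)=1$, the constant block contributes $\calL[1]=1-2\cdot1+1^2=0$. For the three nonconstant blocks I would repeatedly invoke the two elementary facts $\int_0^1 B_1(t)\,\rd t=0$ and $\int_0^1 B_2(\{t\})\,\rd t=0$; the latter, by $1$-periodicity of $t\mapsto B_2(\{t\})$, also gives $\int_0^1 B_2(\{x-y\})\,\rd x=\int_0^1 B_2(\{x-y\})\,\rd y=0$. Every summand of $K_{\lin}$, $K_{\kor}$, $K_{\m}$ is labelled by a nonempty coordinate set and hence, for at least one coordinate $j$, contains a factor whose integral over $x_j$ and over $y_j$ each vanishes. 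Consequently the $\int_{[0,1]^{2s}}$-term and the $-2\sum_k w_k\int_{[0,1]^s}$-term of~\eqref{eq:wce} are zero for each nonconstant block, so that what survives is only the double sum $\sum_{k,\ell=1}^M w_k w_\ell\,K_\bullet(\bsx_k,\bsx_\ell)$.

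It then remains to match these three surviving double sums with the objects named in the statement. The one coming from $K_{\lin}$ is $\sum_{k,\ell}w_k w_\ell\sum_{\emptyset\ne\setu\subseteq\{1:s\}}\prod_{j\in\setu}\gamma_j B_1(x_{k,j})B_1(x_{\ell,j})$, which is precisely the right-hand side of~\eqref{eq:wcelin} after the rescaling $\gamma_j\mapsto\gamma_j/12$ (so that $12\cdot(\gamma_j/12)=\gamma_j$), i.e.\ $\wce(Q;K^{\lin}_{s,\bsgamma/12})^2$. The one coming from $K_{\kor}$ is $\sum_{k,\ell}w_k w_\ell\sum_{\emptyset\ne\setu\subseteq\{1:s\}}\prod_{j\in\setu}\gamma_j B_2(\{x_{k,j}-x_{\ell,j}\})/2$, which is the right-hand side of~\eqref{eq:wceKor} after $\gamma_j\mapsto\gamma_j/(2\pi)^2$ (so that $2\pi^2\cdot(\gamma_j/(2\pi)^2)=\gamma_j/2$), i.e.\ $\wce(Q;K^{\kor1}_{s,\bsgamma/(2\pi)^2})^2$. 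The one coming from $K_{\m}$ is, term by term, exactly the last triple sum displayed in the statement. Adding the four contributions gives the asserted identity.

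I expect no genuinely hard step: the whole argument is the linearity of~\eqref{eq:wce} in the kernel together with the vanishing marginals of Bernoulli polynomials. The only places needing care are the bookkeeping of the index pair $(\setu,\setv)$ in~\eqref{eq:Ksob} — so that the mixed block indeed carries $\prod_{j\in\setu\setminus\setv}\gamma_j B_1 B_1$ with each weight to the first power and the three nonconstant blocks do not overlap — and keeping the two distinct weight rescalings ($\bsgamma/12$ for the multilinear part, $\bsgamma/(2\pi)^2$ for the Korobov part) consistent with the normalising constants $12$ and $2\pi^2$ appearing in~\eqref{eq:wcelin} and~\eqref{eq:wceKor}.
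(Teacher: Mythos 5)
Your proposal is correct and follows essentially the same route as the paper, which simply substitutes the expansion~\eqref{eq:Ksob} into the general formula~\eqref{eq:wce} and matches terms with~\eqref{eq:wcelin} and~\eqref{eq:wceKor}; you merely spell out the linearity in the kernel, the vanishing of the marginal integrals of $B_1$ and $B_2(\{\cdot\})$, and the weight rescalings $\bsgamma/12$ and $\bsgamma/(2\pi)^2$. Your care with the index pair $(\setu,\setv)$ is well placed, since the mixed block must indeed be read as $\prod_{j\in\setu\setminus\setv}\gamma_j B_1(x_j)B_1(y_j)\prod_{j'\in\setv}\gamma_{j'}B_2(\{x_{j'}-y_{j'}\})/2$, exactly as it appears in the proposition.
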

\begin{proof}
  This can be found by direct calculation using~\eqref{eq:Ksob} in~\eqref{eq:wce} and comparing terms with the worst-case errors in the Korobov space~\eqref{eq:wceKor} and the multilinear space~\eqref{eq:wcelin}.
\end{proof}

This means our worst-case error is constituted of the worst-case error in the multilinear space (with the weights scaled by $1/12$) and the worst-case error in the Korobov space of smoothness~$1$ (with the weights rescaled by $1/(2\pi)^2$) plus a ``mixture term''.
For the optimal modified lattice rule $Q^{*}$ the error in the multilinear space is zero.
Additionally, the worst-case error in the Korobov space does not change for a vertex modified lattice rule as it just distributes the weight of the point $\bszero$ to the other vertices, but such that the sum of all vertex weights is still $1/N$, see~\eqref{eq:wceQmKor}.

Obviously, in only one dimension, the mixture term is not present as we cannot take both $\setu$ and $\setv$ non-empty, and then the worst-case error in the Sobolev space of smoothness~$1$ equals the worst-case error of the respective lattice rule in the Korobov space of smoothness~$1$ (with rescaled weights) when multilinear functions are integrated exactly.
In two dimensions the mixture term can be rewritten into a nice form as we show in the next proposition which gives the worst-case errors for $s=1$ and~$s=2$.

\begin{proposition}\label{prop:2d-wce}
  For $s=1$ with any $Q(f) = \sum_{k=1}^{M} w_k \, f(x_k)$, where $\sum_{k=1}^{M} w_{k}=1$,
  \begin{align*}
  \wce(Q; K^{\sob 1}_{1,\bsgamma})^2
  &=
  \wce(Q; K^{\lin}_{1,\bsgamma/12})^2
  +
  \wce(Q; K^{\kor 1}_{1,\bsgamma/(2\pi)^2})^2
  .
  \end{align*}
  Specifically the one-dimensional trapezoidal rule, $T(f) = \frac1N \sum_{k=1}^{N-1} f(k/N) + (f(0) + f(1)) / (2N)$, which is equal to the optimal vertex modified rule for $s=1$, gives
  \begin{align*}
  \wce(T; K^{\sob 1}_{1,\bsgamma})
  =
  \wce(Q^*; K^{\sob 1}_{1,\bsgamma})
  &=
  \sqrt{\frac{\gamma_1}{12}} \, \frac1N
  .
  \end{align*}
  For $s=2$ we have for an optimal vertex modified lattice rule $Q^*(\cdot; \bsz, N)$, with $\gcd(z_1,N)=1$ and $\gcd(z_2,N)=1$,
  \begin{align}
  \label{eq:wce2-s2}
  \wce(Q^{*}; K^{\sob 1}_{2,\bsgamma})^2
  &=
  \wce(Q^{*}; K^{\kor 1}_{2,\bsgamma/(2\pi)^2})^2
  +
  \frac{\gamma_1 \gamma_2}{8\, \pi^2 N^2}
  \sum_{j\in\{1,2\}}
  \sum_{\substack{h \ge 1 \\ h \not\equiv 0~(\operatorname{mod}{N})}}
  \frac{\cot^2(\pi h w_j / N)}{h^2}
  ,
  \end{align}
  where we have set $w_1 \equiv z_1^{-1} z_2 \pmod{N}$ and $w_2 \equiv z_2^{-1} z_1 \pmod{N}$, such that $w_2 \equiv w_1^{-1} \pmod{N}$.
  Furthermore
  \begin{align*}
  \wce(Q^{*}; K^{\sob 1}_{2,\bsgamma})^2 
  &>
  \wce(Q^{*}; K^{\kor 1}_{2,\bsgamma/(2\pi)^2})^2
  +
  \frac{\gamma_1 \gamma_2}{8\,\pi^2 N^2}
  \sum_{j\in\{1,2\}}
  \sum_{h = 1}^{N-1}
  \frac{\cot^2(\pi h w_j / N)}{h^2}
  \\
  \wce(Q^{*}; K^{\sob 1}_{2,\bsgamma})^2
  &<
  \wce(Q^{*}; K^{\kor 1}_{2,\bsgamma/(2\pi)^2})^2
  +
  \frac{\gamma_1 \gamma_2}{48 \, N^2}
  \sum_{j\in\{1,2\}}
  \sum_{h = 1}^{N-1}
  \frac{\cot^2(\pi h w_j / N)}{h^2}
  .
  \end{align*}
\end{proposition}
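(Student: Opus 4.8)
The plan is to start from Proposition~\ref{prop:wce} and carry out the stated decomposition for the specific case $s=2$ with $Q = Q^*(\cdot;\bsz,N)$. For $s=1$ the mixture term is empty (one cannot pick $\emptyset \neq \setv \subsetneq \setu$ when $|\setu| \le 1$), so the first identity is immediate; then, since $\wce(Q^*;K^{\lin}_{1,\bsgamma/12}) = 0$ by construction and since for $s=1$ the optimal vertex modified rule coincides with the trapezoidal rule, it remains only to evaluate $\wce(T;K^{\kor 1}_{1,\bsgamma/(2\pi)^2})$. Using the simplified lattice-rule formula $\frac1N\sum_{k=0}^{N-1}(2\pi^2(\gamma_1/(2\pi)^2))B_2(k/N) = \frac{\gamma_1}{2N}\sum_{k=0}^{N-1}B_2(k/N)$ and the elementary identity $\sum_{k=0}^{N-1}B_2(k/N) = \frac{1}{6N}$ (equivalently, $\sum_{k=0}^{N-1}(k/N)^2 - (k/N) = -\frac16 + \frac{1}{6N^2}$ after summing; the point is that only the $h \equiv 0 \pmod N$ Fourier modes of $B_2$ survive, contributing $1/(6N^2)$ per ... wait, one should be careful: the lattice-rule worst-case square here is $\gamma_1 \sum_{0 \neq h \equiv 0 (N)} |h|^{-2} = \gamma_1 \cdot \frac{1}{N^2}\sum_{0 \neq m \in \Z} m^{-2}/(2\pi^2) \cdot (2\pi^2) = \frac{\gamma_1}{12 N^2}$). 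Either way this yields $\wce(T;K^{\sob1}_{1,\bsgamma}) = \sqrt{\gamma_1/12}\,/N$.

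For $s=2$, the mixture term in Proposition~\ref{prop:wce} has exactly one contributing pair, $\setu = \{1,2\}$ with $\setv = \{1\}$ or $\setv = \{2\}$, giving
\[
  \sum_{k,\ell=1}^M w_k w_\ell \sum_{j \in \{1,2\}} \gamma_1 \gamma_2 \, B_1(x_{k,j^c})B_1(x_{\ell,j^c}) \, \frac{B_2(\{x_{k,j}-x_{\ell,j}\})}{2},
\]
where $j^c$ is the other index. The first step is to observe that $Q^*$ shares with a plain lattice rule the property that the multiset of differences $\bsx_k - \bsx_\ell$ (over interior points, weighted by $1/N^2$) is translation-structured, \emph{except} that the vertex points and their nonstandard weights must be handled. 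Here I would use the key simplification that has been exploited for~\eqref{eq:wceQmKor}: write $Q^* = Q(\cdot;\bsz,N)$ (the full lattice rule, including $k=0$) $+\,\big(\sum_{\bsa}w^*(\bsa)f(\bsa) - \tfrac1N f(\bszero)\big)$, and note that in the mixture term the factor $B_2(\{x_{k,j}-x_{\ell,j}\})$ together with the vertex structure collapses because $B_2(\{0\}) = B_2(\{1\}) = 1/6$ and $B_1$ takes values $\pm 1/2$ at $0,1$. After expanding, the cross terms between the lattice part and the vertex correction, and the vertex--vertex terms, should telescope into a clean Fourier-analytic expression in one variable. Concretely, after summing over $k$ I would reduce the inner double sum over interior lattice points to a single sum over $k = 1,\ldots,N-1$ of the form $B_1(\{z_{j^c}k/N\})^{\!2}$-type weights times a geometric-sum-in-$h$ factor, invoking the Fourier expansion $B_1(\{t\}) = -\frac{1}{2\pi}\sum_{h\neq 0}\frac{\sin(2\pi h t)}{h}$ and $B_2(\{t\}) = \frac{1}{2\pi^2}\sum_{h\neq 0}\frac{\cos(2\pi h t)}{h^2}$, or rather the combined series for the product.

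The heart of the computation — and the step I expect to be the main obstacle — is turning the resulting exponential sum into the cotangent-squared form in~\eqref{eq:wce2-s2}. After reparametrising the lattice index via $w_1 \equiv z_1^{-1}z_2$, $w_2 \equiv z_2^{-1}z_1 \pmod N$ (so that the direction along which $B_2$ acts and the direction along which $B_1^2$ acts are coupled through $w_j$), one lands on sums of the shape $\sum_{h \not\equiv 0 (N)} h^{-2}\big|\sum_{k=0}^{N-1} e^{2\pi i h w_j k/N}(\cdots)\big|^2$, and the inner geometric-type sums must be evaluated in closed form. The identity one needs is essentially $\big|\sum_{k=1}^{N-1}\{k/N\}e^{2\pi i m k/N}\big|$-type evaluated to a cotangent, or more directly the classical Gauss-type formula $\sum_{k=0}^{N-1} B_1(\{k/N + \beta\})\,e^{-2\pi i m k/N} = $ (something)$\cdot \cot(\pi m/N)$ when $m \not\equiv 0$; squaring produces $\cot^2(\pi h w_j/N)$. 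Care is needed about which $h$ survive — the exact formula sums over all $h \ge 1$ with $h \not\equiv 0 \pmod N$ (an infinite sum), while the bounds truncate to $h = 1,\ldots,N-1$. For the bounds: dropping the tail $h \ge N$ (all terms nonnegative) gives the strict lower bound with constant $\frac{1}{8\pi^2}$; for the upper bound, one folds the tail back onto $h = 1,\ldots,N-1$ using $N$-periodicity of $\cot^2(\pi h w_j/N)$ and the comparison $\sum_{r \ge 0}(h + rN)^{-2} \le$ (an explicit multiple of) $h^{-2}$ for $1 \le h \le N-1$ — the constant $\frac{1}{48} = \frac{1}{8\pi^2}\cdot\frac{\pi^2}{6}$ reflects bounding $\sum_{r\ge 0}(h+rN)^{-2}$ crudely by $\frac{\pi^2}{6}h^{-2}$ (since $\sum_{r\ge 0}(h+rN)^{-2} \le \sum_{m \ge 1} m^{-2} = \pi^2/6$ when we factor out... more precisely $\le h^{-2}\sum_{r\ge0}(1+r)^{-2} = h^{-2}\pi^2/6$ using $h+rN \ge h(1+r)$). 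Assembling the two directions $j \in \{1,2\}$ and adding $\wce(Q^*;K^{\kor1}_{2,\bsgamma/(2\pi)^2})^2$ back in completes the proof.
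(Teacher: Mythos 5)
Your route is the same as the paper's: Proposition~\ref{prop:wce} for the decomposition, the $s=1$ case by noting $T=Q^*$ and evaluating only the Korobov part (your Fourier-mode computation and the paper's direct Bernoulli sum both give $\gamma_1/(12N^2)$), and for $s=2$ the Fourier expansion of $B_2$, a closed-form cotangent evaluation of the one-dimensional sum $\frac1N\sum_{k=1}^{N-1}B_1(\{z_jk/N\})\exp(\twopii h z_{j'}k/N)$ (this is exactly Lemma~\ref{lem:expsum}), sign symmetry in $h$ to get the factor $1/(8\pi^2)$, the lower bound by keeping only $1\le h\le N-1$, and the upper bound by folding the tail with $h+rN\ge h(1+r)$ and $\zeta(2)=\pi^2/6$, giving $1/48$. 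All of that matches the paper.

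The genuine gap is precisely at the step you yourself mark as uncertain: why the vertex points drop out of the mixture term. After Fourier-expanding $B_2$ the mixture term factorizes as $\frac{\gamma_1\gamma_2}{(2\pi)^2}\sum_{h\ne0}h^{-2}\bigl|\sum_{k}w_k B_1(x_{k,j})\exp(\twopii h x_{k,j'})\bigr|^2$, and the vertex contribution to the inner sum is $\sum_{\bsa\in\{0,1\}^2}w^*(\bsa)B_1(a_j)\exp(\twopii h a_{j'})=\sum_{\bsa}w^*(\bsa)B_1(a_j)$, since the exponential equals $1$ at integer coordinates. The operative fact is the identity $\sum_{\bsa}w^*(\bsa)B_1(a_j)=0$, which the paper obtains from the exactness of $Q^*$ on the multilinear function $B_1(x_j)$ combined with $\frac1N\sum_{k=1}^{N-1}B_1(\{z_jk/N\})=0$ (symmetry of the interior lattice points). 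Your proposed justification --- that $B_2(\{0\})=B_2(\{1\})=1/6$ and $B_1(0),B_1(1)=\mp\tfrac12$ make the vertex and cross terms ``telescope'' --- is not sufficient: for a general admissible choice of vertex weights summing to $1/N$ (e.g.\ all weight on the vertex $(1,1)$) the quantity $\sum_{\bsa}w(\bsa)B_1(a_j)$ does not vanish, so some specific property of $\bsw^*$ must be invoked, and your argument never does. The same remark applies to your alternative splitting $Q^*=Q(\cdot;\bsz,N)+(\sum_{\bsa}w^*(\bsa)f(\bsa)-\tfrac1N f(\bszero))$: by linearity the correction contributes $\sum_{\bsa}w^*(\bsa)B_1(a_j)+\tfrac1{2N}$ to the inner sum, and cancelling the $k=0$ term of the full lattice sum again requires exactly the identity above. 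Also, your intermediate description of the reduction (a single sum of ``$B_1(\{z_{j^c}k/N\})^2$-type weights times a geometric-sum-in-$h$ factor'') is not what comes out --- the correct object is the squared modulus of the $B_1$-times-exponential sum --- but since your final cotangent identity is the right target this is only a wording issue; the missing exactness argument is the substantive one.
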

\begin{proof}
For $s=1$ and the trapezoidal rule we see from Proposition~\ref{prop:wce} that we only need to consider the error for the space $K_{1,\gamma_1/(2\pi)^2}^{\kor1}$ since $T = Q^*$ for $s=1$.
So we need to look at the two-fold quadrature of $B_2(\{x-y\})$.
Since this function is periodic the trapezoidal rule $T$ reduces to the standard lattice rule~\eqref{eq:latticerule} such that
\begin{align*}
  \wce(T;K_{1,\bsgamma/(2\pi)^2}^{\kor1})^2
  &=
  \frac{\gamma_1}{N^2} \sum_{k,\ell=0}^{N-1} \frac{B_2((k-\ell \bmod{N})/N)}{2}
  =
  \frac{\gamma_1}{N} \sum_{k=0}^{N-1} \frac{B_2(k/N)}{2}
  =
  \frac{\gamma_1}{12 \, N^2}
  .
\end{align*}

For $s=2$ and
a general cubature rule $Q(f) = \sum_{k=1}^{M} w_{k} \, f(\bsx_{k})$ there are two $2$-dimensional mixture terms in Proposition~\ref{prop:wce}: for $j=1$, $j'=2$ and $j=2$, $j'=1$ we have
{\allowdisplaybreaks
\begin{align}
  \label{eq:double-sum}
  &\hspace{-1mm}\sum_{k,\ell=1}^{M} w_k w_\ell \,
  \gamma_j \, B_1(x_{k,j}) B_1(x_{\ell,j}) \,
  \gamma_{j'} \, \frac{B_2(\{x_{k,j'} - x_{\ell,j'}\})}{2}
  \\
  \nonumber
  &=
  \frac{\gamma_j \, \gamma_{j'}}{(2 \pi)^2}
  \sum_{k=1}^{M} w_k
   B_1(x_{k,j})
  \sum_{\ell=1}^{M} w_{\ell}
  B_1(x_{\ell,j})
  \sum_{0 \ne h \in \Z} \frac{\exp(\twopii h (x_{k,j'} - x_{\ell,j'}))}{h^2}
  \\
  \nonumber
  &=
  \frac{\gamma_j \, \gamma_{j'}}{(2 \pi)^2}
  \sum_{0 \ne h \in \Z} 
  \frac1{h^2}
  \left[
  \sum_{k=1}^{M} w_k
  B_1(x_{k,j})
  \exp(\twopii h x_{k,j'})
  \right]
  \left[ 
  \sum_{\ell=1}^{M} w_{\ell}
  B_1(x_{\ell,j})
  \exp(-\twopii h x_{\ell,j'})
  \right]
  \\
  \nonumber
  &=
  \frac{\gamma_j \, \gamma_{j'}}{(2 \pi)^2}
  \sum_{0 \ne h \in \Z} 
  \frac1{h^2}
  \left|
  \sum_{k=1}^{M} w_k
  B_1(x_{k,j})
  \exp(\twopii h x_{k,j'})
  \right|^2
  ,
\end{align}}%
where we used the Fourier expansion of $B_2$ as given in~\S\ref{sec:Korobov-space}.
We now focus on the 2-dimensional cubature sum inside the modulus.
For the optimal vertex modified lattice rule $Q^{*}$ this cubature sum gives 
\begin{multline*}
  \sum_{k=1}^{M} w_{k} \,
  B_1(x_{k,j}) \,
  \exp(\twopii h x_{k,j'})
  \\=
  \sum_{\bsa \in \{0,1\}^2} w^{*}(\bsa) \, B_1(a_j) \, \exp(\twopii h a_{j'})
  +
  \frac1N
  \sum_{k=1}^{N-1}
  B_1\left(\left\{\frac{z_j k}{N}\right\}\right) \,
  \exp(\twopii h z_{j'} k / N)
  .
\end{multline*}
In the first part the exponential disappears as $\exp(\twopii h a_{j'}) = 1$ for all $\bsa \in \{0,1\}^2$.
Furthermore the whole sum over $\bsa \in \{0,1\}^2$ vanishes as, using $\gcd(z_j, N) = 1$,
\begin{multline*}
  Q^{*}(B_1(x_j); \bsz, N)
  =
  0
  \\=
  \sum_{\bsa \in \{0,1\}^2} w^{*}(\bsa) \, B_1(a_j) + \frac1N \sum_{k=1}^{N-1} B_1\left(\left\{\frac{z_j k}{N}\right\}\right)
  =
  \sum_{\bsa \in \{0,1\}^2} w^{*}(\bsa) \, B_1(a_j)
  ,
\end{multline*}
where the equality to zero follows from the exactness for multilinear functions and the sum over $k$ vanishes due to symmetry.
Thus, using $Q^{*}$ and making use of the forthcoming Lemma~\ref{lem:expsum} and the fact that $\gcd(z_{j'}, N) = 1$, we find, for $w_j = z_j^{-1} z_{j'} \bmod{N}$, with $z_j^{-1}$ the multiplicative inverse of $z_j$ modulo $N$,
\begin{multline*}
  \frac1N
  \sum_{k=1}^{N-1}
  B_1\left(\left\{\frac{z_j k}{N}\right\}\right)
  \exp(\twopii h z_{j'} k / N)
  \\=
  \begin{cases}
    0 & \text{when } h w_j \equiv 0 \pmod{N} , \\
    -\imagunit \cot(\pi h w_j / N) / (2N) & \text{otherwise}.
  \end{cases}
\end{multline*}
It thus follows that, for $Q=Q^*$, each mixture term takes the form
\begin{multline*}
  \frac{\gamma_j \, \gamma_{j'}}{(2 \pi)^2}
  \sum_{0 \ne h \in \Z} 
  \frac1{h^2}
  \left|
  \sum_{k=1}^{M} w_k
  B_1(x_{k,j})
  \exp(\twopii h x_{k,j'})
  \right|^2
  \\=
  \frac{\gamma_j \, \gamma_{j'}}{(4 \pi)^2 N^2}
  \sum_{\substack{0 \ne h \in \Z \\ hw_j \not\equiv 0~(\operatorname{mod}{N})}} 
  \frac{\cot^2(\pi h w_j / N)}{h^2}
  .
\end{multline*}
Making use of $\gcd(w_j, N) = 1$ and using the sign-symmetry on the sum we obtain
{
\allowdisplaybreaks
\begin{align*}
  \frac{2\, \gamma_j \, \gamma_{j'}}{(4 \pi)^2 N^2}
  \sum_{\substack{h \ge 1 \\ hw_j \not\equiv 0~(\operatorname{mod}{N})}}
  \frac{\cot^2(\pi h w_j / N)}{h^2}
  &=
  \frac{\gamma_j \, \gamma_{j'}}{8 \, \pi^2 N^2}
  \sum_{\substack{h \ge 1 \\ h \not\equiv 0~(\operatorname{mod}{N})}}
  \frac{\cot^2(\pi h w_j / N)}{h^2}
  \\
  &=
  \frac{\gamma_j \, \gamma_{j'}}{8\,\pi^2 N^2}
  \sum_{\ell \ge 0}
  \sum_{h = 1}^{N-1}
  \frac{\cot^2(\pi (\ell N + h) w_j / N)}{(\ell N + h)^2}
  \\
  &=
  \frac{\gamma_j \, \gamma_{j'}}{8\,\pi^2 N^2}
  \sum_{\ell \ge 0}
  \sum_{h = 1}^{N-1}
  \frac{\cot^2(\pi h w_j / N)}{(\ell N + h)^2}
  \\
  &=
  \frac{\gamma_j \, \gamma_{j'}}{8\,\pi^2 N^2}
  \sum_{h = 1}^{N-1}
  \frac{\cot^2(\pi h w_j / N)}{h^2}
  \sum_{\ell \ge 0}
  \frac{1}{(\ell N / h + 1)^2}
  \\
  &<
  \frac{\gamma_j \, \gamma_{j'}}{8 \, \pi^2 N^2}
  \sum_{h = 1}^{N-1}
  \frac{\cot^2(\pi h w_j / N)}{h^2} 
  \sum_{\ell \ge 1}
  \frac{1}{\ell^2}
  \\
  &=
  \frac{\gamma_j \, \gamma_{j'}}{48 \, N^2}
  \sum_{h = 1}^{N-1}
  \frac{\cot^2(\pi h w_j / N)}{h^2}
  .
\end{align*}}%
For the upper bound we have set $h=N-1$ in the sum over $\ell \ge 0$ and then used $N/(N-1) > 1$ and $\sum_{\ell\ge1} \ell^{-2} = \pi^2/6$.
The lower bound is easily derived from the same line by considering the case $\ell = 0$ only.
\end{proof}

It is a little bit unfortunate that the $\cot^2$-sum for both $w_1$ and $w_2$ appears in~\eqref{eq:wce2-s2}.
We strongly believe that the infinite sum over $h$ is the same for $w_1$ and $w_2$, and this is equivalent to obtaining the same value for~\eqref{eq:double-sum}.
If this is true than also in the upper and lower bound we just remain with twice either of the sums.
We verified the equality on~\eqref{eq:double-sum} numerically for all $N \le 4001$ and $z \in \{1,\ldots,N-1\}$ with $\gcd(z,N)=1$ and could not find a counter example.
Moreover in Corollary~\ref{cor:wce-Fib}, forthcoming, we show equality to always hold in case of Fibonacci lattice rules.
Therefore we make the following conjecture.


\begin{conjecture}\label{con:symmetry}
  Given integers $z$ and $N$, with $\gcd(z, N) = 1$, we have
  \begin{multline*}
    \sum_{k,\ell=1}^{N-1} B_1(k/N) \, B_2((z (k-\ell) \bmod{N})/N) \, B_1(\ell/N)
    \\=
    \sum_{k,\ell=1}^{N-1} B_1(k/N) \, B_2((z^{-1} (k-\ell) \bmod{N})/N) \, B_1(\ell/N)
    ,
  \end{multline*}
  where $z^{-1}$ is the multiplicative inverse of $z$ modulo~$N$.
\end{conjecture}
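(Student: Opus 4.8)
The plan is to recast the conjectured combinatorial identity as a symmetry of a finite trigonometric sum, and then settle that symmetry using the elementary identity $\cot^2\theta = \csc^2\theta - 1$. Write $S(z)$ for the left‑hand side of the conjecture, so the goal is $S(z) = S(z^{-1})$. The first move is to expand $B_2$ into its Fourier series $B_2(\{t\}) = \frac1{2\pi^2}\sum_{0\neq h\in\Z} h^{-2}\exp(\twopii ht)$ (as in \S\ref{sec:Korobov-space}), use that $\exp(\twopii h(z(k-\ell)\bmod N)/N) = \exp(\twopii hz(k-\ell)/N)$, and thereby factor the double sum over $k,\ell$ into a single square: $S(z) = \frac1{2\pi^2}\sum_{0\neq h\in\Z} h^{-2}\,\big|\sum_{k=1}^{N-1} B_1(k/N)\exp(\twopii hzk/N)\big|^2$. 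The inner sum is precisely the exponential sum evaluated in the proof of Proposition~\ref{prop:2d-wce}: since $\gcd(z,N)=1$ it equals $-\tfrac{\imagunit}{2}\cot(\pi hz/N)$ when $N\nmid h$ and vanishes otherwise, whence $S(z) = \frac1{8\pi^2}\sum_{0\neq h\in\Z,\, N\nmid h} h^{-2}\cot^2(\pi hz/N)$.

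The next step is to collapse this bilateral infinite sum to a finite one. Folding $h\leftrightarrow -h$ and writing $h = \ell N + r$ with $\ell\ge 0$ and $r\in\{1,\dots,N-1\}$, the cotangent depends only on $r$, so $S(z) = \frac1{4\pi^2}\sum_{r=1}^{N-1}\cot^2(\pi rz/N)\sum_{\ell\ge 0}(\ell N + r)^{-2}$. Because $\cot^2(\pi rz/N)$ is invariant under $r\mapsto N-r$, one may replace the arithmetic factor by its $r\mapsto N-r$ symmetrisation, and the classical partial‑fraction expansion $\sum_{n\in\Z}(nN+r)^{-2} = \pi^2/(N^2\sin^2(\pi r/N))$ then turns it into $\pi^2/(2N^2\sin^2(\pi r/N))$. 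This yields the closed form $S(z) = \frac1{8N^2}\,g_z$ with $g_z := \sum_{r=1}^{N-1} \cot^2(\pi rz/N)\,\csc^2(\pi r/N)$, so the conjecture becomes the symmetry $g_z = g_{z^{-1}}$.

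The decisive step is then to write $\cot^2(\pi rz/N) = \csc^2(\pi rz/N) - 1$, separating off a piece independent of $z$: $g_z = h_z - C$, where $C := \sum_{r=1}^{N-1}\csc^2(\pi r/N)$ is a constant and $h_z := \sum_{r=1}^{N-1}\csc^2(\pi rz/N)\,\csc^2(\pi r/N)$. Since $\gcd(z,N)=1$, the substitution $r\mapsto rz\bmod N$ is a bijection of $\{1,\dots,N-1\}$, and because the function $x\mapsto\sin^2(\pi x/N)$ depends only on $x$ modulo $N$, this substitution simply interchanges the two cosecant factors; hence $h_z = h_{z^{-1}}$, and therefore $g_z = h_z - C = h_{z^{-1}} - C = g_{z^{-1}}$, which is the conjecture. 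In particular the two $\cot^2$‑sums attached to $w_1$ and $w_2 = w_1^{-1}$ in Proposition~\ref{prop:2d-wce} coincide, and the bounds there reduce to twice a single sum.

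Once the trigonometric reduction of the first two steps is in hand, I do not expect any genuinely hard step; the one idea that is not purely mechanical is the decomposition $\cot^2 = \csc^2 - 1$, since reindexing $g_z$ directly does not help — the numerator $\cos^2(\pi rz/N)$ and the surviving denominator $\sin^2(\pi r/N)$ play asymmetric roles — whereas replacing $\cos^2$ by $1 - \sin^2$ removes that asymmetry at the cost of a $z$‑free constant. The part most prone to slips is the bookkeeping of the first two steps: keeping track of the exclusion $N\nmid h$, the $h\leftrightarrow -h$ fold, and applying the $r\mapsto N-r$ symmetrisation only to $\sum_{\ell\ge 0}(\ell N+r)^{-2}$ and not to $\cot^2(\pi rz/N)$; one should also check the edge case $r = N/2$ when $N$ is even, where the symmetrised factor equals $2\sum_{\ell\ge 0}(\ell N + N/2)^{-2} = \pi^2/N^2$, consistent with the general formula.
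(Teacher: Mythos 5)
Your argument is correct, and it is genuinely different from what the paper does: the paper does not prove Conjecture~\ref{con:symmetry} at all, but only verifies it numerically for $N\le 4001$ and proves the special case of Fibonacci lattice rules in Lemma~\ref{lem:symm-Fib}, where the symmetry is immediate from $F_{k-1}^{-1}\equiv\pm F_{k-1}\pmod{F_k}$ and $B_2(t)=B_2(1-t)$. Your route instead establishes the general statement: the Fourier expansion of $B_2$ together with Lemma~\ref{lem:expsum} (with $z_j=1$, $\theta=hz$) gives $S(z)=\frac1{8\pi^2}\sum_{0\ne h,\,N\nmid h}h^{-2}\cot^2(\pi hz/N)$, the fold $h\leftrightarrow-h$, the split $h=\ell N+r$ and the $r\mapsto N-r$ symmetrisation combined with $\sum_{n\in\Z}(nN+r)^{-2}=\pi^2/(N^2\sin^2(\pi r/N))$ collapse this to the finite closed form $S(z)=\frac1{8N^2}\sum_{r=1}^{N-1}\cot^2(\pi rz/N)\csc^2(\pi r/N)$ (I checked this, e.g.\ $N=3$, $z=1$ gives $1/81$ both ways), and then the identity $\cot^2=\csc^2-1$ plus the bijection $r\mapsto z^{-1}r \bmod N$ of $\{1,\dots,N-1\}$ — using that $\csc^2(\pi x/N)$ depends only on $x\bmod N$ — yields the symmetry $g_z=g_{z^{-1}}$; every step (absolute convergence justifying the rearrangements, boundedness of $\cot^2(\pi hz/N)$ for $N\nmid h$, the vanishing of the exponential sum when $N\mid h$) is sound. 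What this buys relative to the paper is substantial: the two $j$-terms in \eqref{eq:wce2-s2} coincide, so the bounds there and in \eqref{eq:wce2-sqrt} reduce to twice a single $\cot$-sum, the phrase ``sufficiently large prime $N$'' in Proposition~\ref{prop:wce-2d-logn} can be replaced by ``prime $N\ge3$'' as anticipated in its statement, and Corollary~\ref{cor:wce-Fib} becomes a special case rather than the only instance where equality is known. The one presentational nit is the edge-case remark about $r=N/2$: the factor-of-two bookkeeping there is stated for the unhalved symmetrisation $\sigma(r)+\sigma(N-r)$, which is consistent but worth phrasing unambiguously (and for even $N$ the term vanishes anyway since $z$ is odd, so $\cot^2(\pi z/2)=0$); otherwise you have, in effect, upgraded the conjecture to a theorem.
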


The following lemma was used in the proof of Proposition~\ref{prop:2d-wce} for the cubature sum of the linear Bernoulli polynomial in dimension $j$ with a single exponential function in dimension $j'$, taking $\theta = h z_{j'}$.
The lemma is also valid for a product of exponential functions which in the case of lattice rules would give $\theta = \bsh_{\setu} \cdot \bsz_{\setu}$ for some $\setu \subset \{1:s\}$.
\begin{lemma}\label{lem:expsum}
  For $\theta \in \Z$ and $\gcd(z_j, N)=1$, denote by $z_{j}^{-1}$ the multiplicative inverse of $z_{j}$ modulo~$N$, then 
  \begin{align*}
    \frac1N \sum_{k=1}^{N-1} B_1\!\left(\left\{\frac{z_j k}{N}\right\}\right)
          \exp(\twopii \theta \, k / N)
    &=
    \begin{cases}
    0 , & \text{if } \theta \equiv 0 \pmod{N}, \\[1mm]
    \displaystyle \frac{-\imagunit}{2N} \cot(\pi z_j^{-1} \theta / N) , & \text{otherwise}.
    \end{cases}
  \end{align*}
\end{lemma}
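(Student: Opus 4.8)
The plan is to re‑index the sum so that it becomes an explicit combination of geometric sums evaluated at a nontrivial $N$-th root of unity, and then to evaluate those in closed form. First I would exploit $\gcd(z_j,N)=1$: the map $k \mapsto z_j k \bmod{N}$ is a bijection of $\{1,\ldots,N-1\}$ onto itself, so writing $m = z_j k \bmod{N}$ (equivalently $k \equiv z_j^{-1} m \pmod{N}$), using $\{z_j k/N\} = m/N$ and the fact that $\exp(\twopii\theta k/N)$ depends only on $k \bmod{N}$, the left-hand side equals
\begin{align*}
  \frac1N \sum_{m=1}^{N-1} B_1\!\left(\frac{m}{N}\right) \exp\!\left(\twopii \frac{z_j^{-1}\theta\, m}{N}\right).
\end{align*}
Setting $\omega := \exp(\twopii z_j^{-1}\theta/N)$ and expanding $B_1(m/N) = m/N - \tfrac12$, this is $\frac1{N^2}\sum_{m=1}^{N-1} m\,\omega^m - \frac1{2N}\sum_{m=1}^{N-1}\omega^m$.

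If $\theta \equiv 0 \pmod{N}$ then $\omega = 1$, the two sums are $\tfrac{(N-1)N}{2}$ and $N-1$, and the expression vanishes. If $\theta \not\equiv 0 \pmod{N}$ then, since $\gcd(z_j^{-1},N)=1$, $\omega$ is an $N$-th root of unity with $\omega \ne 1$. Then $\sum_{m=1}^{N-1}\omega^m = -1$ (from $\sum_{m=0}^{N-1}\omega^m = 0$), and $\sum_{m=1}^{N-1} m\,\omega^m = \frac{N}{\omega-1}$, which I would obtain by differentiating $\sum_{m=0}^{N-1} x^m = (x^N-1)/(x-1)$, multiplying by $x$, and then substituting $x=\omega$ using $\omega^N=1$. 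Substituting and simplifying,
\begin{align*}
  \frac1{N^2}\cdot\frac{N}{\omega-1} + \frac1{2N}
  =
  \frac1N \cdot \frac{\omega+1}{2(\omega-1)},
\end{align*}
and the half-angle computation $\frac{\omega+1}{\omega-1} = -\imagunit\cot(\pi z_j^{-1}\theta/N)$ (factor $\exp(\imagunit\phi/2)$ out of numerator and denominator, with $\omega = \exp(\imagunit\phi)$, $\phi = 2\pi z_j^{-1}\theta/N$, so $\omega\pm1 = \exp(\imagunit\phi/2)\cdot 2\{\cos,\imagunit\sin\}(\phi/2)$; note $\sin(\phi/2)\ne0$ precisely because $N\nmid\theta$) delivers the claimed value $-\imagunit\cot(\pi z_j^{-1}\theta/N)/(2N)$.

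The argument is entirely elementary, so there is no genuine obstacle; the one place needing a little care is the evaluation of $\sum_m m\,\omega^m$ at the root of unity $\omega$, where one must differentiate the closed form of the geometric series \emph{before} substituting $x=\omega$ (or, equivalently, pass to the limit $x\to\omega$), since $\omega$ is a zero of $x^N-1$. Finally, the generalization noted after the lemma statement — replacing the single exponential by a product $\exp(\twopii\,\bsh_{\setu}\cdot\bsz_{\setu}\,k/N)$ — requires no modification whatsoever: one simply sets $\theta = \bsh_{\setu}\cdot\bsz_{\setu}$, since the computation above used nothing about $\theta$ beyond its being an integer (indeed, only its residue modulo $N$).
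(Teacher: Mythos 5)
Your proof is correct and follows essentially the same route as the paper's: reindex by the bijection $k \mapsto z_j k \bmod N$, split off the $-\tfrac12$ from $B_1$, use $\sum_{m=1}^{N-1}\omega^m=-1$, evaluate the weighted geometric sum (you by differentiating the geometric series, the paper by a summation-by-parts/telescoping identity, both giving $N/(\omega-1)$), and finish with an equivalent cotangent identity. The only additions are cosmetic: you spell out the trivial $\theta\equiv 0 \pmod N$ case and the reindexing step, which the paper leaves implicit.
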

\begin{proof}
  With $a = \exp(\twopii z_j^{-1} \theta / N)$ and $z_j^{-1} \theta \not\equiv 0 \pmod{N}$ we have
  \begin{align*}
    \frac1N \sum_{k=1}^{N-1} B_1\!\left(\frac{k}{N}\right) 
      a^k
    &=
    -\frac1{2N} \sum_{k=1}^{N-1} a^k
    +
    \frac1N \sum_{k=1}^{N-1} \frac{k}{N} a^k
    ,
  \end{align*}
  where $\sum_{k=1}^{N-1} a^k = -1$ as $a^N = 1$.
  Now using
  \begin{align*}
    \sum_{k=1}^{N-1} \frac{k}{N} (f(k+1)-f(k))
    &=
    -\frac1N \sum_{k=1}^{N-1} f(k) + \frac{N-1}{N} f(N)
    ,
  \end{align*}
  and, for $a \ne 1$,
  \begin{align*}
    a^k
    &=
    \frac{a^{k+1}}{a-1} - \frac{a^k}{a-1}
  \end{align*}
  we find
  \begin{align*}
    \sum_{k=1}^{N-1} \frac{k}{N} a^k
    &=
    -\frac1N \frac1{a-1} \sum_{k=1}^{N-1} a^k
    +
    \frac{N-1}{N} \frac{a^N}{a-1}
    ,
  \end{align*}
  where again $a^N = 1$ and $\sum_{k=1}^{N-1} a^k = -1$.
  Thus
  \begin{align*}
    \frac1N \sum_{k=1}^{N-1} B_1\!\left(\frac{k}{N}\right) 
      a^k
    &=
    \frac1{2N} + \frac1N \frac1{a-1}
    .
  \end{align*}
  The proof is then completed by taking $t=\pi z_j^{-1} \theta / N$ in the identity $-\mathrm{i} \cot(t) = 1 + 2 / (\exp(2\mathrm{i}\,t) - 1)$.
\end{proof}

\subsection{Upper and Lower Bound}

In Proposition~\ref{prop:2d-wce} we already obtained an upper and a lower bound on $\wce(Q^*;K_{2,\bsgamma}^{\sob1})^2$, but they were in terms of the sum
\begin{align}\label{eq:cot2sum}
  \frac1{N^2}
  \sum_{h=1}^{N-1} \frac{\cot^2(\pi h w/N)}{h^2}
  ,
\end{align}
with $\gcd(w, N) = 1$.
In fact also the sum with $w^{-1}$, the multiplicative inverse of $w$ modulo~$N$, should be considered if Conjecture~\ref{con:symmetry} is false. 
If the conjecture would be false then this can be fixed in the end by assuming $N$ to be large enough (see the remark after Proposition~\ref{prop:wce-2d-logn}).
Note that the sum is $1$-periodic in $t=w/N$ as well as having the symmetry $\cot^2(\pi t) = \cot^2(\pi (1-t)) = \cot^2(-\pi t)$.

Below we will use the series
\begin{align}\label{eq:harmonic}
  H_N(a)
  &:=
  \sum_{h=1}^N \frac1{h^a}
  ,
\end{align}
where we consider $a\ge1$.
This is known as the harmonic number of $N$ of order $a$. If we set $N=\infty$ we get the Riemann zeta function
\begin{align}\label{eq:zeta}
  \zeta(a)
  &:=
  \sum_{h=1}^\infty \frac1{h^a}
  ,
\end{align}
which is finite for $a > 1$.
Since $\zeta(1) = \infty$ we can look at how $H_N(1)$ increases.
For $N\ge3$ we have
\begin{align}\label{eq:harmonic1-bound}
  H_N(1) 
  &\le 
  \frac{11}{6\log(3)} \log(N)
  .
\end{align}
The above elementary bound follows from the definition of the Euler--Mascheroni constant $\lim_{N\to\infty} H_N(1) - \log(N) 
\approx 0.5772$, which converges monotonically from above. 
Solving $H_3(1) = c \log(3)$ results in~\eqref{eq:harmonic1-bound} for $N\ge3$.
We will also make use of the following identity
\begin{align}\label{eq:cot2average}
  \frac1{N-1} \sum_{w=1}^{N-1} \cot^2(\pi w / N) 
  &=
  \frac{N-2}3
  ,
\end{align}
which can be seen by the closed form solution of the Dedekind sum $S(z,N)$ with $z=1$.

The standard approach to show existence of a good generating vector is to prove a good upper bound for the average over all possible generating vectors.
We first show a general lower bound and then an upper bound for the average choice of generating vector on the above $\cot^2$-sum.

\begin{lemma}
  For $N \ge 3$ and any choice of $w$ such that $\gcd(w,N)=1$, the following lower bound holds:
  \begin{align*}
    \frac1{N^2} \sum_{h=1}^{N-1} \frac{\cot^2(\pi h w / N)}{h^2}
    >
    \frac1{6 N^2}
    .
  \end{align*}
\end{lemma}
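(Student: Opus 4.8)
The plan is to extract the bound from just two carefully chosen terms of the sum, using that every summand is nonnegative. First I would let $h_1\in\{1,\dots,N-1\}$ be the multiplicative inverse of $w$ modulo $N$ and set $h_2:=N-h_1\in\{1,\dots,N-1\}$. Since $\gcd(h_1,N)=\gcd(w^{-1},N)=1$ and $N\ge3$, we have $h_1\ne N/2$, so $h_1\ne h_2$ and these are two genuinely distinct indices in the sum. Because $h_1 w\equiv1$ and $h_2 w\equiv-1\pmod N$, both corresponding angles equal $\pm\pi/N$ modulo $\pi$, so $\cot^2(\pi h_1 w/N)=\cot^2(\pi h_2 w/N)=\cot^2(\pi/N)$. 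Discarding all the remaining (nonnegative) terms therefore gives
\begin{align*}
  \frac1{N^2}\sum_{h=1}^{N-1}\frac{\cot^2(\pi h w/N)}{h^2}
  &\ge
  \frac{\cot^2(\pi/N)}{N^2}\left(\frac1{h_1^2}+\frac1{(N-h_1)^2}\right).
\end{align*}

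Next I would bound the two factors separately. Convexity of $x\mapsto x^{-2}$ (the midpoint inequality at $N/2$) yields $h_1^{-2}+(N-h_1)^{-2}\ge 8/N^2$, and this is strict since $h_1\ne N/2$. For the cotangent factor, $N\ge3$ gives $0<\pi/N\le\pi/3$, hence $\cos(\pi/N)\ge\cos(\pi/3)=\tfrac12$ and $0<\sin(\pi/N)<\pi/N$, so that $\cot^2(\pi/N)=\cos^2(\pi/N)/\sin^2(\pi/N)>\tfrac14\,N^2/\pi^2$. Multiplying the two estimates,
\begin{align*}
  \frac1{N^2}\sum_{h=1}^{N-1}\frac{\cot^2(\pi h w/N)}{h^2}
  &>
  \frac1{N^2}\cdot\frac{N^2}{4\pi^2}\cdot\frac8{N^2}
  =
  \frac{2}{\pi^2 N^2}
  >
  \frac1{6N^2},
\end{align*}
where the last step uses $\pi^2<12$; this is exactly the claim.

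There is no real obstacle here; the only things to watch are (i) that $h_1$ and $h_2$ are distinct elements of $\{1,\dots,N-1\}$, which is precisely where $N\ge3$ and $\gcd(w,N)=1$ are used, and (ii) keeping the chain of inequalities strict. One could attempt to economise by retaining only the single term indexed by $h_1$ (or by $\min(h_1,N-h_1)\le(N-1)/2$), but that loses a factor of $2$ and yields only the constant $1/\pi^2<1/6$; it is the sign-symmetric pair $h_1,N-h_1$ that makes the stated constant come out.
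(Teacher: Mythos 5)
Your proof is correct, but it goes a genuinely different way than the paper. The paper keeps the whole sum: it uses that multiplication by $w$ permutes the nonzero residues, bounds every denominator by $1/h^2 > 1/(N-1)^2$, and then invokes the closed-form cotangent (Dedekind-sum) identity $\frac1{N-1}\sum_{h=1}^{N-1}\cot^2(\pi h/N)=\frac{N-2}{3}$, giving the lower bound $\frac{N-2}{3(N-1)N^2}$, which is $\ge\frac1{6N^2}$ exactly when $N\ge3$. You instead discard everything except the sign-symmetric pair $h\equiv\pm w^{-1}\pmod N$, where $\cot^2$ takes its largest value $\cot^2(\pi/N)$, and finish with elementary estimates: convexity of $x\mapsto x^{-2}$ for $h_1^{-2}+(N-h_1)^{-2}\ge 8/N^2$, and $\cos(\pi/N)\ge\tfrac12$, $\sin(\pi/N)<\pi/N$ for $\cot^2(\pi/N)>N^2/(4\pi^2)$, yielding the constant $2/\pi^2>1/6$. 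Your argument is more self-contained (no appeal to the identity \eqref{eq:cot2average}) and all the details check out, including the need for $h_1\ne N-h_1$ (which your use of $\gcd(h_1,N)=1$ and $N\ge3$ handles) and the strictness of the final inequality; the paper's route buys a slightly better asymptotic constant ($(N-2)/(3(N-1))\to 1/3$ versus your $2/\pi^2\approx0.203$) and reuses an identity it needs elsewhere, but for the stated lemma either proof suffices.
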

\begin{proof}
  We have $\{ h w \bmod{N} : h \in \{1,\ldots,N-1\}\} = \{1,\ldots,N-1\}$ since $\gcd(w,N) = 1$, thus
  \begin{align*}
    \sum_{h=1}^{N-1} \frac{\cot^2(\pi h w / N)}{h^2}
    &>
    \sum_{h=1}^{N-1} \frac{\cot^2(\pi h w/ N)}{(N-1)^2}
    =
    \sum_{h=1}^{N-1} \frac{\cot^2(\pi h / N)}{(N-1)^2}
    =
    \frac{N-2}{3(N-1)}
    ,
  \end{align*}
  where we used~\eqref{eq:cot2average}.
\end{proof}

The previous lemma shows that we cannot expect the worst-case error to be better behaving than $1/N$ which is not a surprise as this is the expected convergence for 1D.
We now check what happens if we uniformly pick a $w$ from $\{1,\ldots,N-1\}$ for prime $N\ge3$.
Surprisingly this can be calculated exactly.

\begin{lemma}\label{lem:average-cot2sum}
  For a prime $N \ge 3$, the average over $w \in \{1,\ldots,N-1\}$ of the $\cot^2$-sum~\eqref{eq:cot2sum} is given by
  \begin{align*}
    \frac1{N-1} \sum_{w=1}^{N-1} \frac1{N^2} \sum_{h=1}^{N-1} \frac{\cot^2(\pi h w / N)}{h^2}
    &=
    \frac{N-2}{3N^2} H_{N-1}(2)
    \quad\le\quad
    \frac{\pi^2}{18 N}
    .
  \end{align*}
\end{lemma}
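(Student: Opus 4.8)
The plan is to interchange the order of summation and then exploit the primality of $N$ together with the Dedekind-sum identity~\eqref{eq:cot2average}. Writing the double sum as $\sum_{h=1}^{N-1} h^{-2}\sum_{w=1}^{N-1}\cot^2(\pi h w/N)$, I would fix $h$ and note that, since $N$ is prime and $1\le h\le N-1$, we have $\gcd(h,N)=1$; hence $w\mapsto hw\bmod N$ is a bijection of $\{1,\ldots,N-1\}$ onto itself, and $hw\bmod N$ never equals $0$. Combining this with the $1$-periodicity and the symmetry $\cot^2(\pi t)=\cot^2(\pi(1-t))$ recorded just before the lemma, one gets $\sum_{w=1}^{N-1}\cot^2(\pi h w/N)=\sum_{w=1}^{N-1}\cot^2(\pi w/N)$, a quantity independent of $h$.

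Next I would invoke~\eqref{eq:cot2average} to evaluate $\sum_{w=1}^{N-1}\cot^2(\pi w/N)=(N-1)(N-2)/3$. Substituting back, the factor $(N-1)$ cancels the $1/(N-1)$ coming from the averaging, so the remaining sum over $h$ is precisely $\sum_{h=1}^{N-1}h^{-2}=H_{N-1}(2)$ in the notation of~\eqref{eq:harmonic}. Reinstating the $1/N^2$ prefactor yields the claimed closed form $\tfrac{N-2}{3N^2}H_{N-1}(2)$. For the upper bound I would simply bound $H_{N-1}(2)<\zeta(2)=\pi^2/6$ via~\eqref{eq:zeta} and $N-2<N$, giving $\tfrac{N-2}{3N^2}H_{N-1}(2)<\tfrac{\pi^2}{18N}$, hence $\le\pi^2/(18N)$.

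There is essentially no hard step; the one point requiring care is that the reindexing $w\mapsto hw$ genuinely needs $N$ prime, so that \emph{every} $h$ in the range is invertible modulo $N$ and the inner sum over $w$ is the same for all $h$ — for composite $N$ that inner sum would depend on $\gcd(h,N)$ and the average would no longer collapse to a multiple of $H_{N-1}(2)$. I would therefore flag explicitly that the primality hypothesis is used exactly at the bijection step.
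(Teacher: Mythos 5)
Your proposal is correct and follows essentially the same route as the paper's own proof: interchange the order of summation, use primality of $N$ to reindex $w\mapsto hw\bmod N$ so that the inner $\cot^2$-sum is independent of $h$, evaluate it via~\eqref{eq:cot2average}, and then bound $H_{N-1}(2)$ by $\zeta(2)=\pi^2/6$. Nothing further is needed.
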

\begin{proof}
  Since $N$ is prime we have $\gcd(h,N)=1$ and thus $\{ h w \bmod{N} : w \in \{1,\ldots,N-1\}\} = \{1,\ldots,N-1\}$.
  Therefore
 \begin{align*}
    \frac1{N-1} \sum_{w=1}^{N-1} \frac1{N^2} \sum_{h=1}^{N-1} \frac{\cot^2(\pi h w / N)}{h^2}
    &=
    \frac1{N^2} \sum_{h=1}^{N-1} \frac1{h^2} \frac1{N-1} \sum_{w = 1}^{N-1} \cot^2(\pi w / N) 
    \\
    &=
    \frac{N-2}{3N^2} \sum_{h=1}^{N-1} \frac1{h^2}
    \le 
    \frac{\zeta(2)}{3N}
    =
    \frac{\pi^2}{18N}
    ,
  \end{align*}
  where we used~\eqref{eq:cot2average}.
\end{proof}

Unfortunately the above result only allows us to say that the expected worst-case error is only as good as the Monte Carlo rate of $N^{-1/2}$ (since the sum~\eqref{eq:cot2sum} appears in the squared worst-case error).
To get a better bound we need another approach.
If we pick the $w$ which gives the best possible value for the \emph{square root} of the sum~\eqref{eq:cot2sum} then this will also be the best value for the sum directly.
Furthermore, using the following inequality, often called ``Jensen's'' inequality, we have
\begin{align}\label{eq:abscot}
  \left( \frac1{N^2} \sum_{h=1}^{N-1} \frac{\cot^2(\pi hw/N)}{h^2} \right)^{1/2}
  &\le
  \frac1{N} \sum_{h=1}^{N-1} \frac{|\cot(\pi hw/N)|}{h}
  .
\end{align}
We now use a popular trick in proving existence:
the value for the best choice $w^*$ to minimize (either side of)~\eqref{eq:abscot} will be at least as small as the average over all possible choices of $w$, thus
\begin{align*}
  \frac1{N} \sum_{h=1}^{N-1} \frac{|\cot(\pi hw^*/N)|}{h}
  &\le
  \frac1{N-1} \sum_{w=1}^{N-1}
  \frac1{N} \sum_{h=1}^{N-1} \frac{|\cot(\pi hw/N)|}{h}
  .
\end{align*}
The next lemma will give an upper bound for the right hand side above.
The argument that the best choice will be at least as good as the average is used numerous times in Ian Sloan's work and is also used inductively in component-by-component algorithms to construct lattice rules achieving nearly the optimal convergence order, see, e.g., \cite{SR2002,SKJ2002,DKS2013}.

\begin{lemma}\label{lem:abscot}
  For a prime $N \ge 3$, the average over $w \in \{1,\ldots,N-1\}$ of the $|\cot|$-sum in~\eqref{eq:abscot} is given by
  \begin{align*}
    \frac1{N-1} \sum_{w=1}^{N-1}
    \frac1{N} \sum_{h=1}^{N-1} \frac{|\cot(\pi hw/N)|}{h}
    &\le
    \frac{H_{N-1}(1)}{N} \frac{6}{\pi} \log(N)
    .
  \end{align*}
\end{lemma}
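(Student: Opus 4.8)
The plan is to swap the order of summation and exploit that $N$ is prime. First I would interchange the two finite sums to obtain
\begin{align*}
  \frac1{N-1} \sum_{w=1}^{N-1} \frac1N \sum_{h=1}^{N-1} \frac{|\cot(\pi hw/N)|}{h}
  =
  \frac1N \sum_{h=1}^{N-1} \frac1h \cdot \frac1{N-1} \sum_{w=1}^{N-1} |\cot(\pi hw/N)|.
\end{align*}
Since $N$ is prime and $1 \le h \le N-1$, we have $\gcd(h,N)=1$, so the map $w \mapsto hw \bmod N$ permutes $\{1,\ldots,N-1\}$; hence the inner average over $w$ does not depend on $h$ and equals $\frac1{N-1}\sum_{w=1}^{N-1}|\cot(\pi w/N)|$. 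This collapses the double sum to $\bigl(\frac1{N-1}\sum_{w=1}^{N-1}|\cot(\pi w/N)|\bigr) \cdot \frac1N \sum_{h=1}^{N-1} \frac1h = \frac{H_{N-1}(1)}{N}\cdot\frac1{N-1}\sum_{w=1}^{N-1}|\cot(\pi w/N)|$, using the notation~\eqref{eq:harmonic}.

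It then remains to bound $\frac1{N-1}\sum_{w=1}^{N-1}|\cot(\pi w/N)| \le \frac6\pi \log N$. I would estimate $\sum_{w=1}^{N-1}|\cot(\pi w/N)|$ directly. Using the symmetry $|\cot(\pi w/N)| = |\cot(\pi(N-w)/N)|$, it suffices to control $2\sum_{1 \le w \le N/2} \cot(\pi w/N)$. On $(0,\pi/2]$ one has the elementary bound $\cot t \le 1/t$ (equivalently $\tan t \ge t$), so $\cot(\pi w/N) \le \frac{N}{\pi w}$, giving
\begin{align*}
  \sum_{w=1}^{N-1}|\cot(\pi w/N)|
  \le
  \frac{2N}{\pi} \sum_{1 \le w \le N/2} \frac1w
  \le
  \frac{2N}{\pi} H_{N-1}(1).
\end{align*}
Dividing by $N-1$ and combining with the first display would then yield a bound of the shape $\frac{H_{N-1}(1)}{N}\cdot\frac{2N}{\pi(N-1)}H_{N-1}(1)$, i.e.\ essentially $\frac{2}{\pi}\bigl(\tfrac{N}{N-1}\bigr) H_{N-1}(1)^2/N$, which is slightly worse than claimed: it carries an extra factor of $H_{N-1}(1)$ rather than a bare $\log N$. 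So the crude bound $\cot t \le 1/t$ summed naively is not tight enough.

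The main obstacle, therefore, is to replace the trivial per-term estimate by something that saves the extra logarithm. The right tool is a bound on $\sum_{w=1}^{N-1}|\cot(\pi w/N)|$ that is $O(N\log N)$ with the explicit constant $\tfrac6\pi$ — this is a classical estimate (related to bounds on Dedekind-type sums and to $\sum |\cot|$ appearing in the analysis of the discrepancy of $\{w/N\}$). Concretely I expect to prove $\sum_{w=1}^{N-1}|\cot(\pi w/N)| \le \frac{6}{\pi}(N-1)\log N$ (perhaps via a finer split, treating the $w$ near $0$ and near $N$ with $\cot(\pi w/N) \le \frac{N}{\pi w}$ but then summing $\sum_{w=1}^{\lfloor N/2\rfloor} 1/w$ against the correct harmonic bound, and checking the constant for small $N$ by hand), after which the lemma follows immediately from the collapsed double sum: $\frac1{N-1}\sum_w \frac1N\sum_h \frac{|\cot(\pi hw/N)|}{h} = \frac{H_{N-1}(1)}{N}\cdot\frac1{N-1}\sum_{w=1}^{N-1}|\cot(\pi w/N)| \le \frac{H_{N-1}(1)}{N}\cdot\frac6\pi\log N$. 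Getting the constant exactly $6/\pi$ rather than something larger is the one delicate point, and I would isolate it as the heart of the argument.
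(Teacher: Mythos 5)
Your reduction step is exactly the paper's: interchange the sums and use that, for prime $N$ and $1\le h\le N-1$, multiplication by $h$ permutes the residues, so the double sum collapses to $\frac{H_{N-1}(1)}{N}\cdot\frac1{N-1}\sum_{w=1}^{N-1}|\cot(\pi w/N)|$. The gap is in the second half: the decisive estimate $\frac1{N-1}\sum_{w=1}^{N-1}|\cot(\pi w/N)|\le\frac6\pi\log N$ is never actually proved --- you only announce that you ``expect to prove'' a bound with constant $6/\pi$ via some finer split or a classical Dedekind-type estimate. As written, the heart of the lemma is deferred, so the proof is incomplete.

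The irony is that the elementary bound you computed and then discarded already closes this gap; your self-assessment that it is ``not tight enough'' comes from sloppy constant bookkeeping, not from a real deficiency. For odd $N\ge3$ the pairing $w\leftrightarrow N-w$ and $\cot t\le 1/t$ on $(0,\pi/2]$ give $\sum_{w=1}^{N-1}|\cot(\pi w/N)|\le\frac{2N}{\pi}H_{(N-1)/2}(1)$, hence $\frac1{N-1}\sum_{w}|\cot(\pi w/N)|\le\frac{2N}{\pi(N-1)}H_{(N-1)/2}(1)\le\frac3\pi\bigl(1+\log(N/2)\bigr)\le\frac6\pi\log N$ for $N\ge3$, using only $\frac{2N}{N-1}\le3$ and $H_m(1)\le1+\log m$; note the extra harmonic factor $H_{N-1}(1)$ in the target comes from the $1/h$ sum and is not something your cotangent estimate must absorb. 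Completing your argument this way gives a proof marginally different from the paper's, which instead treats the $w=1$ term separately and compares $\sum_{w=2}^{(N-1)/2}\cot(\pi w/N)$ with $N\int\cot(\pi t)\,\rd{t}$ to get $\frac1{N-1}\sum_w|\cot(\pi w/N)|<\frac3\pi\log(3N)\le\frac6\pi\log N$; the integral-comparison route yields a slightly sharper intermediate constant, but for the stated bound your term-by-term estimate suffices.
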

\begin{proof}
  Similar as in the proof of Lemma~\ref{lem:average-cot2sum} we use the fact that the multiplicative inverse of $h$ exists and we can thus just look at the sum over~$w$.
  For $N\ge3$
  {\allowdisplaybreaks
  \begin{align*}
    \frac1{N-1} \sum_{w=1}^{N-1} |\cot(\pi w / N)|
    &=
    \frac2{N-1} \left[ \cot(\pi/N) + \sum_{w=2}^{(N-1)/2} \cot(\pi w / N) \right]
    \\
    &\le
    \frac2{N-1} \left[ \cot(\pi/N) + \int_{1/N}^{(N-1)/(2N)} \cot(\pi t) \, N \,\rd{t} \right]
    \\
    &=
    \frac2{N-1} \left[ \cot(\pi/N) + \frac{N}{\pi} \, (-\log(2\sin(\pi/(2N)))) \right]
    \\
    &<
    \frac{2.2}{\pi} (1 + \log(4N/3))
    \\
    &<
    \frac3\pi \log(3 N)
    ,
  \end{align*}}%
  where we used $2/\sin(\pi/(2N)) \le 4N/3$ for $N\ge3$, with equality for $N=3$, and some elementary bounds.
\end{proof}

We can now combine the previous results in estimating an upper bound for the worst-case error of a good choice of $w$ for the optimal vertex modified rule $Q^*$ for $s=2$.
From Proposition~\ref{prop:2d-wce}, again using Jensen's inequality by taking square-roots on both sides, we obtain
\begin{align}\label{eq:wce2-sqrt}
  \wce(Q^{*}; K^{\sob 1}_{2,\bsgamma})
  &<
  \wce(Q^{*}; K^{\kor 1}_{2,\bsgamma/(2\pi)^2})
  +
  \frac{\sqrt{\gamma_1 \gamma_2}}{\sqrt{48} \, N}
  \sum_{j\in\{1,2\}}
  \sum_{h = 1}^{N-1}
  \frac{|\cot(\pi h w_j / N)|}{h}
\end{align}
where the sum over $j$ could be replaced by $\sqrt2$ if Conjecture~\ref{con:symmetry} is true.

Piecing everything together we obtain the following result.

\begin{proposition}\label{prop:wce-2d-logn}
  Given a sufficiently large prime $N$, then there exist $w \in \{1,\ldots,N-1\}$ such that the optimal vertex modified rule $Q^*$, with generating vector $\bsz = (1, w)$, has worst-case error in the unanchored Sobolev space for $s=2$ of
  \begin{align*}
    \wce(Q^{*}; K^{\sob 1}_{2,\bsgamma})
    &
    <
    \wce(Q^{*}; K^{\kor 1}_{2,\bsgamma/(2\pi)^2})
    +
    \frac{11 \sqrt{2\,\gamma_1 \gamma_2}}{\pi \sqrt{48} \log3} \frac{\log^2(N)}{N}
    .
  \end{align*}
  If Conjecture~\textup{\ref{con:symmetry}} is true then sufficiently large can be replaced by a prime $N \ge 3$.
\end{proposition}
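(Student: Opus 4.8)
The plan is to feed the square-root form of the error decomposition, inequality~\eqref{eq:wce2-sqrt}, into the standard ``the best choice is at least as good as the average'' argument over generating vectors, using Lemma~\ref{lem:abscot} to control the average of the $|\cot|$-sum and the elementary harmonic bound~\eqref{eq:harmonic1-bound} to replace the factor $H_{N-1}(1)$ by a clean multiple of $\log N$. Throughout I would restrict attention to generating vectors of the shape $\bsz=(1,w)$ with $w\in\{1,\dots,N-1\}$; since $N$ is prime this automatically gives $\gcd(z_1,N)=\gcd(z_2,N)=1$, so Proposition~\ref{prop:2d-wce} applies, and moreover $w_1\equiv w$ and $w_2\equiv w^{-1}\pmod N$.

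First assume Conjecture~\ref{con:symmetry} holds. Then, as already noted below~\eqref{eq:wce2-sqrt}, the factor $\sum_{j\in\{1,2\}}$ there may be replaced by $\sqrt2$ acting on the single sum for $w_1=w$, so that
\begin{align*}
  \wce(Q^{*}; K^{\sob 1}_{2,\bsgamma})
  <
  \wce(Q^{*}; K^{\kor 1}_{2,\bsgamma/(2\pi)^2})
  +
  \frac{\sqrt{2\,\gamma_1\gamma_2}}{\sqrt{48}}\,
  \frac1N \sum_{h=1}^{N-1} \frac{|\cot(\pi h w/N)|}{h}
  .
\end{align*}
I would then invoke the averaging principle: there is a $w\in\{1,\dots,N-1\}$ for which $\frac1N\sum_{h=1}^{N-1}\frac{|\cot(\pi hw/N)|}{h}$ is bounded by its average over $w$, which by Lemma~\ref{lem:abscot} is at most $\frac{H_{N-1}(1)}{N}\,\frac6\pi\log N$; bounding $H_{N-1}(1)\le H_N(1)\le\frac{11}{6\log3}\log N$ via~\eqref{eq:harmonic1-bound} gives $\frac1N\sum_{h=1}^{N-1}\frac{|\cot(\pi hw/N)|}{h}\le\frac{11}{\pi\log3}\,\frac{\log^2 N}{N}$. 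Substituting this back produces exactly the stated bound with constant $\frac{11\sqrt{2\,\gamma_1\gamma_2}}{\pi\sqrt{48}\log3}$, and this argument goes through for every prime $N\ge3$.

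Without the conjecture, both summands in~\eqref{eq:wce2-sqrt} must be kept; writing $S(w):=\sum_{h=1}^{N-1}\frac{|\cot(\pi hw/N)|}{h}+\sum_{h=1}^{N-1}\frac{|\cot(\pi hw^{-1}/N)|}{h}$ and using that $w\mapsto w^{-1}$ is a bijection of $\{1,\dots,N-1\}$, the average of $S$ over $w$ equals twice the average of a single $|\cot|$-sum, hence by Lemma~\ref{lem:abscot} is at most $\frac{12}{\pi}\,H_{N-1}(1)\log N$. The crude bound~\eqref{eq:harmonic1-bound} alone is not sharp enough here, but it has room to spare: since $H_{N-1}(1)=\log N+O(1)$ and, from the proof of Lemma~\ref{lem:abscot}, $\frac1{N-1}\sum_{w=1}^{N-1}|\cot(\pi w/N)|=\frac2\pi\log N+O(1)$, the average of $S(w)$ is $\frac4\pi\log^2 N\,(1+o(1))$, and $\frac4\pi<\frac{11\sqrt2}{\pi\log3}$. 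Therefore, for all sufficiently large primes $N$, the average, and hence the best choice of $w$, satisfies $S(w)\le\frac{11\sqrt2}{\pi\log3}\log^2 N$; inserting this into~\eqref{eq:wce2-sqrt} gives the claim.

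The delicate point is pinning the constant $11\sqrt2$ exactly rather than something larger. In the conjectural case this is forced with no slack: one needs the factor $\sqrt2$ gained from the conjecture (so that no loss is incurred in the step $\sqrt{a^2+b}\le a+\sqrt b$) together with the sharp harmonic constant $\tfrac{11}{6\log3}$ from~\eqref{eq:harmonic1-bound}. In the general case the only thing that requires care is making ``sufficiently large'' precise, i.e.\ turning the $o(1)$ above into an explicit threshold: one checks that the two independent crude losses — the factor $2$ coming from $\log(3N)\le2\log N$ inside Lemma~\ref{lem:abscot} and the factor $\tfrac{11}{6\log3}>1$ in~\eqref{eq:harmonic1-bound} — are, beyond that threshold, jointly dominated by the available headroom $\tfrac{11\sqrt2}{4}>1$. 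The remaining manipulations are routine substitutions.
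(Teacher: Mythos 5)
Your proposal is correct and follows essentially the same route as the paper: feed \eqref{eq:wce2-sqrt} into the best-is-at-least-as-good-as-average argument, bound the average via Lemma~\ref{lem:abscot}, and replace $H_{N-1}(1)$ using \eqref{eq:harmonic1-bound}, with the conjecture supplying the $\sqrt2$ factor. Your treatment of the non-conjectural case (averaging the sum of the $w$- and $w^{-1}$-sums and invoking the true asymptotics to absorb the slack in the crude constants for sufficiently large $N$) is in fact more explicit than the paper's one-line proof and its surrounding remarks, but it is the same idea.
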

\begin{proof}  
  From Proposition~\ref{prop:2d-wce} we obtain~\eqref{eq:wce2-sqrt} and combine this with equation~\eqref{eq:harmonic1-bound} and Lemma~\ref{lem:abscot}.
\end{proof}

It is well known that there exist lattice rules for the Korobov space of order~$1$ which have convergence $N^{-1+\delta}$ for $\delta>0$, see, e.g., \cite{SJ94,DKS2013}.
The question of finding a good optimal vertex modified rule for the unanchored Sobolev space now boils down to having $N$ large enough such that the set of good $w$ for the Korobov space and the set of good $w$ for the $|\cot|$-sum overlap.
This is done by showing there exist at least $N/2$ good choices that satisfy twice the average and then necessarily these two sets overlap.
We will not disgress here.
See, e.g., \cite{CKN2010} for such a technique.
Similarly, if the conjecture is not true, then the same technique can be applied by taking $N$ large enough such that all three good sets overlap and one obtains the desired convergence.

\subsection{Fibonacci Lattice Rules}

In \cite{NS94}, Niederreiter and Sloan turn to Fibonacci lattice rules as it is well known they perform best possible in view of many different quality criteria for numerical integration in two dimensions, see, e.g., \cite{Nie92}.
The Fibonacci numbers can be defined recursively by $F_0 = 0$, $F_1 = 1$ and $F_k = F_{k-1} +  F_{k-2}$ for $k \ge 2$.
A Fibonacci lattice rule then takes the number of points a Fibonacci number $N = F_k$ and the generating vector $\bsz = (1, F_{k-1})$, for $k \ge 3$.

We can now show that Conjecture~\ref{con:symmetry} is true for the explicit case of Fibonacci lattice rules.
\begin{lemma}\label{lem:symm-Fib}
  For $z = F_{k-1}$ or $F_{k-2}$ and $N=F_k$, $k\ge3$, we have $\gcd(z, N) = 1$, and
  \begin{multline*}
    \sum_{k,\ell=1}^{N-1} B_1(k/N) \, B_2((z (k-\ell) \bmod{N})/N) \, B_1(\ell/N)
    \\=
    \sum_{k,\ell=1}^{N-1} B_1(k/N) \, B_2((z^{-1} (k-\ell) \bmod{N})/N) \, B_1(\ell/N)
    ,
  \end{multline*}
  where $z^{-1}$ is the multiplicative inverse of $z$ modulo~$N$.
\end{lemma}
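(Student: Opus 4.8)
The plan is to reduce the claimed identity to two elementary observations: the reflection symmetry of the Bernoulli polynomial $B_2$, and the fact that $F_{k-1}$ (and likewise $F_{k-2}$) squares to $\pm 1$ modulo $F_k$. First I would dispose of the coprimality: $\gcd(F_{k-1},F_k)=\gcd(F_{k-2},F_k)=1$ is classical, either by a one-line induction on $F_k=F_{k-1}+F_{k-2}$ showing consecutive Fibonacci numbers are coprime, or from $\gcd(F_m,F_n)=F_{\gcd(m,n)}$. So $z\in\{F_{k-1},F_{k-2}\}$ is invertible modulo $N=F_k$ and the right-hand side of the identity makes sense.

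Next I would record Cassini's identity $F_{k+1}F_{k-1}-F_k^2=(-1)^k$ (immediate induction). Reducing modulo $F_k$ and using $F_{k+1}=F_k+F_{k-1}\equiv F_{k-1}\pmod{F_k}$ gives $F_{k-1}^2\equiv(-1)^k\pmod{F_k}$. Since $F_{k-2}=F_k-F_{k-1}\equiv-F_{k-1}\pmod{F_k}$, the same congruence $z^2\equiv(-1)^k\pmod{F_k}$ holds for $z=F_{k-2}$ as well. Hence in both cases $z^{-1}\equiv(-1)^k\,z\pmod N$: when $k$ is even, $z^{-1}\equiv z$, and when $k$ is odd, $z^{-1}\equiv-z\pmod N$.

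The last ingredient is the symmetry $B_2(1-t)=B_2(t)$ for $0\le t\le1$, which upgrades to $B_2(\{-s\})=B_2(\{s\})$ for every real $s$ (both sides equal $1/6$ when $s\in\Z$, and $\{-s\}=1-\{s\}$ otherwise). Applying this with $s=z(k-\ell)/N$ shows that the double sum $\sum_{k,\ell=1}^{N-1}B_1(k/N)\,B_2(\{z(k-\ell)/N\})\,B_1(\ell/N)$ is unchanged under $z\mapsto-z$ (note $-z\bmod N$ again lies in $\{1,\dots,N-1\}$ since $\gcd(z,N)=1$). Combining: if $k$ is even, $z^{-1}\equiv z\pmod N$ and the two sums agree term by term; if $k$ is odd, $z^{-1}\equiv-z\pmod N$ and the sign-symmetry of $B_2$ delivers the equality. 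This proves the lemma, and also explains why Conjecture~\ref{con:symmetry} holds for Fibonacci rules: the inverse of the generating component $F_{k-1}$ modulo $F_k$ is $F_{k-1}$ itself (for $k$ even) or $F_{k-2}\equiv-F_{k-1}$ (for $k$ odd).

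The only step that is not entirely routine is the Cassini computation: recognising that $F_{k-1}$ and $F_{k-2}$ are each, up to sign, their own multiplicative inverse modulo $F_k$. Everything else is the reflection identity $B_2(t)=B_2(1-t)$ and bookkeeping with fractional parts, so I expect the write-up to be short once that congruence is isolated.
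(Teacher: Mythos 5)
Your proposal is correct and follows essentially the same route as the paper's proof: reduce everything to the congruence $z^{-1}\equiv\pm z\pmod{F_k}$ (plus for $k$ even, minus for $k$ odd) together with the reflection symmetry $B_2(t)=B_2(1-t)$ applied to the fractional parts. The only difference is that the paper simply cites the congruence $F_{k-1}^{-1}\equiv\pm F_{k-1}\pmod{F_k}$ as known, whereas you derive it from Cassini's identity and also spell out the coprimality and the $z=F_{k-2}$ case, which is a harmless (and welcome) filling-in of details.
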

\begin{proof}
  It is known that $F_{k-1}^{-1} \equiv \pm F_{k-1} \pmod{F_k}$ with a plus sign for $k$ even and a minus sign for $k$ odd.
  The result follows from the symmetry $B_2(t) = B_2(1-t)$ for $0 \le t \le 1$.
\end{proof}

Combining Lemma~\ref{lem:symm-Fib} with Proposition~\ref{prop:2d-wce} gives then an exact expression for the worst-case error in case of Fibonacci lattice rules.
Note that $N$ does not need to be prime for this proof.
\begin{corollary}\label{cor:wce-Fib}
  For $Q^*_k$ an optimal vertex modified lattice rule based on a Fibonacci lattice rule with generator $(1,F_{k-1})$ modulo~$F_k$, $k\ge4$, we have
  \begin{align*}
    \wce(Q^*_k; K_{2,\bsgamma/(2\pi)^2}^{\sob1})^2
    &=
    \wce(Q^*_k; K_{2,\bsgamma}^{\kor1})^2
    +
    \frac{\gamma_1 \gamma_2}{4\pi^2 N^2}
    \sum_{\substack{h \ge 1 \\ h \not\equiv 0~(\operatorname{mod}{N})}}
  \frac{\cot^2(\pi h w / N)}{h^2}
  \end{align*}
  where $w = F_{k-1}$ and $N=F_k$.
\end{corollary}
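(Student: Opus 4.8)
The plan is to read the result off the general two-dimensional formula~\eqref{eq:wce2-s2} of Proposition~\ref{prop:2d-wce}, the only new input being that for a Fibonacci rule the two $\cot^2$-sums occurring there, one for each $j\in\{1,2\}$, coincide. Once that is shown, the inner sum over $j$ merely contributes a factor~$2$, turning the prefactor $\gamma_1\gamma_2/(8\pi^2N^2)$ into $\gamma_1\gamma_2/(4\pi^2N^2)$ and leaving a single summand indexed by $w=F_{k-1}$, which is exactly the claim.

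First I would pin down the data. For $\bsz=(1,F_{k-1})$ and $N=F_k$ the quantities entering Proposition~\ref{prop:2d-wce} are $w_1\equiv z_1^{-1}z_2\equiv F_{k-1}\pmod{N}$ and $w_2\equiv z_2^{-1}z_1\equiv F_{k-1}^{-1}\pmod{N}$; that $\gcd(F_{k-1},F_k)=1$, so that $w_2$ makes sense and Proposition~\ref{prop:2d-wce} applies (with the product weights rescaled as in that statement), is immediate from the recurrence, or from Cassini's identity $F_{k-1}F_{k+1}-F_k^2=(-1)^k$, and the hypothesis $k\ge4$ just ensures the Fibonacci rule is non-degenerate (in particular $N\ge3$).

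The crux is the equality of the $j=1$ and $j=2$ terms, which I would establish via Lemma~\ref{lem:symm-Fib}, exactly as the surrounding discussion suggests. Following the proof of Proposition~\ref{prop:2d-wce}, the $j$-term equals, up to the common prefactor, the double sum~\eqref{eq:double-sum}; for the optimal rule $Q^*$ the vertex contribution to that double sum vanishes, and after reindexing $m\equiv z_j k$, $n\equiv z_j\ell\pmod{N}$ — a bijection of $\{1,\dots,N-1\}$ because $\gcd(z_j,N)=1$ — the $(j,j')=(1,2)$ term is a constant times
\[
\sum_{m,n=1}^{N-1} B_1(m/N)\,B_2\!\big((F_{k-1}(m-n)\bmod N)/N\big)\,B_1(n/N),
\]
while the $(2,1)$ term is the same expression with $F_{k-1}$ replaced by its inverse modulo $N$. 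Lemma~\ref{lem:symm-Fib} (with $z=F_{k-1}$) asserts these are equal, hence the two terms of the $j$-sum in~\eqref{eq:wce2-s2} agree; feeding this back into~\eqref{eq:wce2-s2} yields the stated identity, and, as in Lemma~\ref{lem:symm-Fib} itself, primality of $N$ is never used.

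I do not expect a genuine obstacle; the only slightly delicate parts are the constant- and weight-bookkeeping between~\eqref{eq:wce2-s2} and the displayed formula, and checking from the conventions $w_1\equiv z_1^{-1}z_2$, $w_2\equiv z_2^{-1}z_1$ that indeed $w_1=F_{k-1}$ rather than its inverse. One may also sidestep Lemma~\ref{lem:symm-Fib} altogether: Cassini's identity gives $F_{k-1}^2\equiv(-1)^k\pmod{F_k}$, so $w_2\equiv(-1)^kF_{k-1}\pmod{F_k}$, and since $t\mapsto\cot^2(\pi t)$ is $1$-periodic and even we obtain $\cot^2(\pi h w_2/N)=\cot^2(\pi h F_{k-1}/N)$ for every $h$ not divisible by $N$, so the two $\cot^2$-sums in~\eqref{eq:wce2-s2} are equal termwise, which again collapses the $j$-sum to the factor $2$.
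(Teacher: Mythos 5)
Your argument is correct and is essentially the paper's own proof: the paper obtains the corollary precisely by combining Lemma~\ref{lem:symm-Fib} with Proposition~\ref{prop:2d-wce}, so that the two mixture terms coincide and the $j$-sum collapses to a factor $2$, exactly as you describe. Your Cassini-based shortcut ($w_2\equiv(-1)^kF_{k-1}\pmod{F_k}$ plus periodicity and evenness of $\cot^2$) is just the same symmetry $F_{k-1}^{-1}\equiv\pm F_{k-1}$ applied directly to the $\cot^2$-sums rather than to the $B_2$-double sums, so it is a harmless (and slightly more direct) variant rather than a different route.
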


\section{Numerics and a Convolution Algorithm}

In this section we restrict ourselves to $N$ prime.
Similar in spirit as \cite{NC2006-prime,Nuy2014} it is possible to evaluate the sum
\begin{align*}
  S_N(z/N)
  &:=
  \frac1{N^2} \sum_{h=1}^{N-1} \frac{\cot^2(\pi h z / N)}{h^2}
\end{align*}
for all $z \in \{1,\ldots,N-1\}$ simultaneously by a (fast) convolution algorithm.
Take a generator for the cyclic group $\Z_N^\times := \{1,\ldots,N-1\} = \langle g \rangle$ and represent $z = \langle g^{\beta} \rangle$ and $h = \langle g^{-\gamma} \rangle$, where $\langle \cdot \rangle$ denotes calculation modulo~$N$.
Then consider for all $0 \le \beta \le N-2$
\begin{align*}
  S_N(\langle g^{\beta} \rangle)
  &=
  \frac1{N^2} \sum_{\gamma=0}^{N-2} \frac{\cot^2(\pi \langle g^{\beta-\gamma} \rangle / N)}{\langle g^{-\gamma} \rangle^2}
  .
\end{align*}
This is the cyclic convolution of two length $N-1$ vectors and can be calculated by an FFT algorithm. 
In Table~\ref{tbl:results} we show the best choice of $z$ obtained by this method and the associated squared worst-case errors.
Instead of using $h^2$ in the denominator of $S_N$ we actually used a generalized zeta function $\zeta(2,h/N)/N^2$ which is the exact value of the infinite sum in Proposition~\ref{prop:2d-wce}.
These results are plotted in Fig.~\ref{fig:wce}. 
Several reference lines have been superimposed with different powers of $\log(N)$.
We note that for this range of $N$ the $\log^2(N)/N$, see Proposition~\ref{prop:wce-2d-logn}, seems to be an overestimate for the square root of the mixing term.
On the other hand, from the figure we see that the total error for this range of $N$ behaves like $\log^{1/2}(N)/N$ for all practical purposes.
It is interesting to compare this behavior with the results in \cite{Ullrich2014,DU2015} which shows a different algorithm for modifying two-dimensional quasi-Monte Carlo point sets to the non-periodic setting (with $M=5N-2$, while here we have $M=N+3$ for 2D) where an upper bound of $\log^{1/2}(N)/N$ is shown (which is also proven to be the lower bound there).


\begin{table}
\centering
{\normalsize \begin {tabular}{ccccc}%
\hline $N$&$z$&$\mathrm{wce} ^2(Q^*,K^{\mathrm{usob} 1}_{2,\bsone })\,=$&$\mathrm{wce} ^2(Q^*,K^{\mathrm{kor} 1}_{2,\bsone /(2\pi )^2})\,\,+$&mixing term\\\hline %
\ensuremath {17}&\ensuremath {5}&\ensuremath {2.16\cdot 10^{-3}}&\ensuremath {1.92\cdot 10^{-3}}&\ensuremath {2.39\cdot 10^{-4}}\\%
\ensuremath {37}&\ensuremath {11}&\ensuremath {5.33\cdot 10^{-4}}&\ensuremath {4.57\cdot 10^{-4}}&\ensuremath {7.63\cdot 10^{-5}}\\%
\ensuremath {67}&\ensuremath {18}&\ensuremath {1.73\cdot 10^{-4}}&\ensuremath {1.46\cdot 10^{-4}}&\ensuremath {2.66\cdot 10^{-5}}\\%
\ensuremath {131}&\ensuremath {76}&\ensuremath {4.67\cdot 10^{-5}}&\ensuremath {3.92\cdot 10^{-5}}&\ensuremath {7.47\cdot 10^{-6}}\\%
\ensuremath {257}&\ensuremath {76}&\ensuremath {1.37\cdot 10^{-5}}&\ensuremath {1.12\cdot 10^{-5}}&\ensuremath {2.47\cdot 10^{-6}}\\%
\ensuremath {521}&\ensuremath {377}&\ensuremath {3.48\cdot 10^{-6}}&\ensuremath {2.83\cdot 10^{-6}}&\ensuremath {6.48\cdot 10^{-7}}\\%
\ensuremath {1{,}031}&\ensuremath {743}&\ensuremath {9.75\cdot 10^{-7}}&\ensuremath {7.81\cdot 10^{-7}}&\ensuremath {1.94\cdot 10^{-7}}\\%
\ensuremath {2{,}053}&\ensuremath {794}&\ensuremath {2.70\cdot 10^{-7}}&\ensuremath {2.13\cdot 10^{-7}}&\ensuremath {5.70\cdot 10^{-8}}\\%
\ensuremath {4{,}099}&\ensuremath {2{,}511}&\ensuremath {7.06\cdot 10^{-8}}&\ensuremath {5.53\cdot 10^{-8}}&\ensuremath {1.53\cdot 10^{-8}}\\%
\ensuremath {8{,}209}&\ensuremath {3{,}392}&\ensuremath {1.88\cdot 10^{-8}}&\ensuremath {1.46\cdot 10^{-8}}&\ensuremath {4.19\cdot 10^{-9}}\\%
\ensuremath {16{,}411}&\ensuremath {6{,}031}&\ensuremath {4.82\cdot 10^{-9}}&\ensuremath {3.73\cdot 10^{-9}}&\ensuremath {1.09\cdot 10^{-9}}\\%
\ensuremath {32{,}771}&\ensuremath {20{,}324}&\ensuremath {1.26\cdot 10^{-9}}&\ensuremath {9.71\cdot 10^{-10}}&\ensuremath {2.91\cdot 10^{-10}}\\%
\ensuremath {65{,}537}&\ensuremath {25{,}016}&\ensuremath {3.34\cdot 10^{-10}}&\ensuremath {2.55\cdot 10^{-10}}&\ensuremath {7.90\cdot 10^{-11}}\\%
\ensuremath {131{,}101}&\ensuremath {80{,}386}&\ensuremath {8.97\cdot 10^{-11}}&\ensuremath {6.79\cdot 10^{-11}}&\ensuremath {2.18\cdot 10^{-11}}\\%
\ensuremath {262{,}147}&\ensuremath {159{,}921}&\ensuremath {2.30\cdot 10^{-11}}&\ensuremath {1.74\cdot 10^{-11}}&\ensuremath {5.64\cdot 10^{-12}}\\\hline %
\end {tabular}%
}
\caption{Optimal choices of generating vector $(1, z)$ for a selection of prime $N$ for the unanchored Sobolev space of order~$1$}\label{tbl:results}
\end{table}

\begin{figure}
  \centering
  \includegraphics{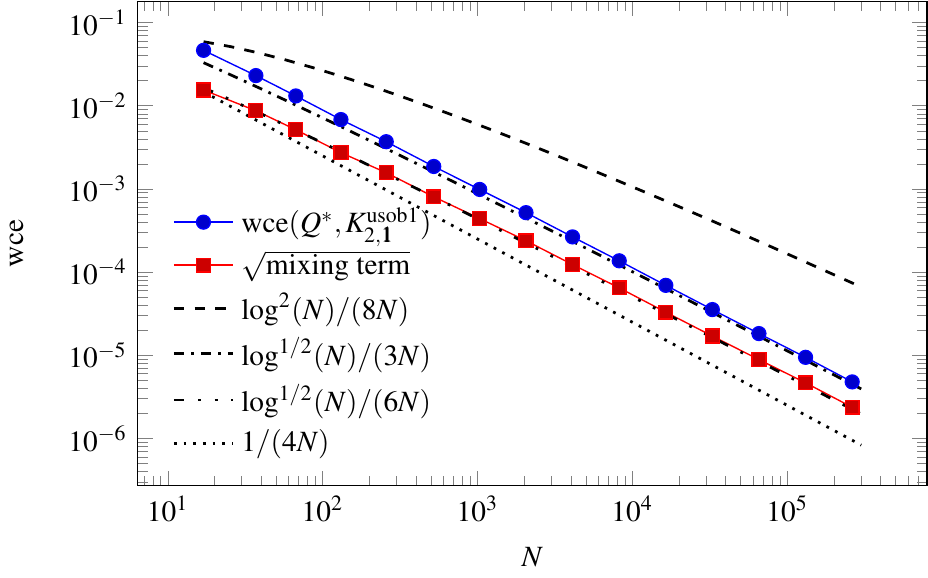}
  \caption{Plot of optimal worst-case error and square root of mixture term from Table~\ref{tbl:results}}\label{fig:wce}
\end{figure}

\section{Conclusion}

In this paper we revisited (optimal) vertex modified lattice rules \cite{NS93,NS94,NS96} introduced by Niederreiter  and Sloan, and studied their error in the unanchored Sobolev space which is one of the typical reproducing kernel Hilbert spaces used to study lattice rules nowadays.
The analysis makes use of a breakdown of the squared worst-case error into the squared worst-case error in a multilinear space, the Korobov space and an additional ``mixture'' term where combinations of basis functions from those two previous spaces appear.
For $s=2$ we showed that there exist optimal vertex modified lattice rules for which the square root of the mixture term converges like $N^{-1} \log^2(N)$.
Because of the $2^s$ cost of evaluating the integrand on all vertices of the unit cube, it does not look very interesting to extend the analysis to an arbitrary number of dimensions.
Although we restricted our detailed analysis to the case $s=2$, we remark that a similar breakdown was achieved in terms of the $L_2$ discrepancy in \cite{RJ2000}, which shows that the cost of $2^s$ vertices still pays off for $s<12$ in their numerical tests.
Such tests would also be useful for the analysis in this paper, as would a component-by-component algorithm for $s>2$.
Finally, a comparison with the bounds in \cite{Ullrich2014,DU2015} suggests the power of the $\log(N)$ term could be improved, as is hinted at by our numerical results.
These are suggestions for future work.

\begin{acknowledgement}
We thank Jens Oettershagen for useful comments and pointers to \cite{Ullrich2014,DU2015}.
We also thank the Taiwanese National Center for Theoretical Sciences (NCTS) - Mathematics
    Division, and the National Taiwan University (NTU) - Department of Mathematics,
    where part of this work was carried out.
We thank the referees for their helpful comments and
acknowledge financial support from the KU~Leuven research
fund (OT:3E130287 and C3:3E150478).
\end{acknowledgement}

%

\bibliographystyle{spmpsci} 
\bibliography{mrabbrev,bib-abbrev}

%

\end{document}